    \def\B{\mbox{\boldmath $B$}}
    \def\D{\mbox{\boldmath $D$}}
    \def\E{\mbox{\boldmath $E$}}
    \def\e{\mbox{\boldmath $e$}}
    \def\f{\mbox{\boldmath $f$}}
    \def\g{\mbox{\boldmath $g$}}
    \def\J{\mbox{\boldmath $J$}}   
    \def\n{\mbox{\boldmath $n$}}
    \def\Q{\mbox{\boldmath $Q$}}        
    \def\u{\mbox{\boldmath $u$}}
    \def\w{\mbox{\boldmath $w$}}
    \def\v{\mbox{\boldmath $v$}}
    \def\rmd{{\rm d}}
    \def\L{{\mathbf L}}
    \def\H{{\mathbf H}}
     \def\0{\mbox{\boldmath $0$}}
    \newtheorem{thm}{Theorem}[section]
	\newtheorem{lem}[thm]{Lemma}
    \numberwithin{equation}{section}
\begin{document}

\title{Optimal error estimates of a second-order temporally finite element method for electrohydrodynamic equations   }

\author{Shengfeng Wang \thanks{School of Mathematical Sciences, University of Electronic Science and Technology of China, Chengdu 611731, China, 202111110509@std.uestc.edu.cn}, \ \
	Maojun Li \thanks{School of Mathematical Sciences, University of Electronic Science and Technology of China, Chengdu 611731, China, limj@uestc.edu.cn},  \ \
	Zeyu Xia \thanks{School of Mathematical Sciences, University of Electronic Science and Technology of China, Chengdu 611731, China, zeyuxia@uestc.edu.cn,  Corresponding Author} }
	
\date{} 

\maketitle

\begin{abstract}

In this work, we mainly present the optimal convergence rates of the temporally second-order finite element scheme for solving the electrohydrodynamic equation. 
Suffering from the highly coupled nonlinearity, the convergence analysis of the numerical schemes for such a system is rather rare, not to mention the optimal error estimates for the high-order temporally scheme. 
To this end, we abandon the traditional error analysis method following the process of energy estimate, which may lead to the loss of accuracy. Instead, we note that the charge density also possesses the ``energy'' decaying property directly derived by its governing equation, although it does not appear in the energy stability analysis. This fact allows us to control the error terms of the charge density more conveniently, which finally leads to the optimal convergence rates.
Several numerical examples are provided to demonstrate the theoretical results, including the energy stability, mass conservation, and convergence rates.

\ \

\noindent {\bf Keywords:} Electrohydrodynamic equation, \, Finite element methods, \,  Second-order temporally accuracy, \, Energy stability, \, Optimal convergence rates

\end{abstract}

\section{Introduction}

The hydrodynamic behavior in electromagnetic fields plays a crucial role in the engineering applications, and its research is quite popular in scientific computation in recent years. These phenomena can be governed by the incompressible Navier--Stokes (NS) equation
\begin{align}
	\rho_f(\partial_t\u + \u\cdot\nabla\u) - \eta\Delta\u + \nabla p &= \g + \f,	\label{NS-total1}	\\
	\nabla\cdot\u &= 0.	\label{NS-total2}
\end{align}
In this system, $(\u, p)$ represents the velocity-pressure pair; the positive constants $\rho_f$ and $\eta$ denote the density and viscosity of the fluid, respectively; $\g$ stands for the gravity; and $\f$ is the body force depending on different fields and fluids. 
The study of various field-fluid interactions can mainly be divided into three categories: One category is magnetohydrodynamics \cite{Bellan2008}, which describes the interactions between the magnetic fields and conductive fluid, where $\f$ is the Lorentz force; another category is ferrohydrodynamics \cite{Rosensweig2013}, in which the fluid can be magnetized in the presence of a magnetic field and the body force $\f$ is the polarization force; the last category, the subject of this work, is electrohydrodynamics, where the fluid is influenced by the electric field and is driven by the Coulomb force $\f$. 

As an important branch of the hydrodynamics, the electrohydrodynamics (EHD) has been widely applied in many fields. For example, in \cite{Li2025} the authors designed a self-powered triboelectric-EHD pump combining  a soft EHD pump driven by an electrostatic generator. In \cite{Feng2004} the authors utilize the EHD phenomena to enhance and control mass and heat transfer in both terrestrial and microgravity environments, which facilitates the flow distribution control through the conduction pumping. In addition, the study on EHD can serve the electrostatic precipitator \cite{Higuera1999}, laminar and turbulent mixing \cite{Castellanos1991, Perez2014} and more other applications \cite{Collins2008, Hayati1986}. 

To describe the EHD phenomena precisely, one must combine the NS equation \eqref{NS-total1}-\eqref{NS-total2} with the Maxwell's equations. It is well-known that the Coulomb force takes the form of $\f=\rho\E$, where $\rho$ represents the charge density and $\E$ denotes the electric fields. 
Thus, we now need to introduce the relationship between $\rho$ and $\E$. With the help of the constitutive relation $\nabla\cdot\D=\rho$ and $\E = \epsilon \D$, we naturally obtain 
\begin{align}\label{constitutive-1}
	\epsilon\nabla\cdot \E =\rho,
\end{align} 
where $\D$ is the electric displacement and $\epsilon$ is the dielectric permittivity assumed to be a positive constant in isotropic materials. Furthermore, if we assume the fluid to be non-magnetic, the Faraday's law $\nabla\times\E = -\partial_t \B= \0$ shows that the electric field $\E$ is curl-free, which allows one to define 
\begin{align}\label{constitutive-2}
	\E = -\nabla\phi,
\end{align}
with an artificial electric potential $\phi$. 
If the charge density is conserved, an application of Am\`{e}re's circuital law yields
\begin{align}\label{constitutive-3}
	\partial_t\rho + \nabla\cdot\J = 0,
\end{align} 
with current density $\J = \rho\u + \sigma\E - D\nabla\rho$, where $\sigma$ and $D$ are the drift-induced conductivity and charge diffusion coefficient, respectively. More details can be referred to \cite{Monk2003, Pan2020, Wang2024c}.

Consequently, combining NS equation \eqref{NS-total1}-\eqref{NS-total2} with the constitutive relations \eqref{constitutive-1}-\eqref{constitutive-3}, the governing equations of the EHD system can be formulated as follows:
\begin{align}
	-\epsilon\Delta\phi &= \rho,	\label{PDE1}	\\
	\partial_t\rho + \nabla\cdot(\rho\u) - D\Delta\rho + \frac{\sigma}{\epsilon}\rho & = 0, 	\label{PDE2}	\\
	\partial_t\u + \u\cdot\nabla\u - \eta\Delta\u + \nabla p &=-\rho\nabla\phi,	\label{PDE3}	\\
	\nabla\cdot\u &= 0,	\label{PDE4}
\end{align}
in $\Omega\times(0, T)$, where $\Omega$ is a bounded convex polyhedral domain in $\mathbb{R}^3$ (polygonal domain in $\mathbb{R}^2$) and $T>0$ represents the terminal time.
Without loss of the generality, we assume $\rho_f=1$, and note that for the general case $\rho_f>0$ there are no essential differences in the analysis. 
To close the system, we consider EHD equation \eqref{PDE1}-\eqref{PDE4} subject to the following initial and boundary conditions
\begin{align*}
	\rho|_{t=0}=\rho_0, \qquad 	\u|_{t=0} = \u_0,
	\qquad (\n\cdot\nabla\phi)|_{\partial\Omega}=0,
	\qquad (\n\cdot\nabla\rho)|_{\partial\Omega}=0,
	\qquad \u|_{\partial\Omega}=\0,
\end{align*} 
where $\rho_0$ and $\u_0$ are given, and $\n$ is the unit outer normal vector on the boundary $\partial\Omega$.
Particularly, from \eqref{PDE1} we obtain the charge conservation property:
\begin{align*}
	\int_\Omega\rho \rmd x = -\epsilon\int_\Omega\Delta\phi \rmd x 
		= -\epsilon \int_{\partial\Omega} \n\cdot\nabla\phi \rmd s =0, 
\end{align*}
which is actually consistent with the assumption. Moreover, we assume $\int_\Omega\phi\rmd x=0$ to guarantee the unique solvability of \eqref{PDE1}.
The regularity analysis for such a system can be found in \cite{Cai2024, Deng2010, Li2009, Smit2016} and the references therein.

Concerning the numerical methods, there exist extensive works focusing on the energy stability and computational efficiency.
In \cite{Pan2023, Pan2020, Pan2021, Zhu2024}, the authors developed a class of efficient numerical algorithms for solving the EHD system with constant/variable density and 
conductivity. 
Moreover, to solve the variable density EHD system, some fully decoupled linear and energy-stable schemes based on the ``zero-energy-contribution'' method were proposed in \cite{He2023, Wang2024}.
Although the numerical investigation for the EHD system is numerous, the optimal error estimates are quite rare. In \cite{He2023b}, the authors provided a temporal error analysis for the first-order semi-discrete scheme in the two-dimensional case based on the scalar auxiliary variable approach. More recently, in \cite{Li2024} the authors designed temporally first- and second-order fully decoupled schemes, and proved the first-order accuracy in time and the sub-optimal convergence rates in space.
From the references above, it can be seen that obtaining an optimal error estimate for the numerical scheme is highly desirable, especially for achieving the high-order temporally accuracy.

To this end, we propose a fully discrete scheme by utilizing the second-order backward differentiation formula (BDF) and finite element methods in the work, and then present a rigorous error estimate for the proposed scheme. Additionally, we prove the unconditional energy stability, unique solvability, and discrete charge conservation, which are the essential properties of the continuous system \eqref{PDE1}-\eqref{PDE4}. 
To achieve the optimal convergence rates, we do not adopt the traditional approach following energy stability, which may lead to losing accuracy. Instead, we discover that the charge density also satisfies the following ``energy'' decaying property:
\begin{align*}
	\frac{1}{2}\dfrac{\rmd}{\rmd t}\|\rho\|_{L^2}^2 = -D\|\nabla \rho\|_{L^2}^2 - \frac{\sigma}{\epsilon}\|\rho\|_{L^2} \leq 0, 
\end{align*}
via testing \eqref{PDE2} by $\rho$. This fact enables us to control the error term of the charge density first through its diffusion term, and in turn we estimate the electric potential $\phi$ by establishing an elliptic equation.
By employing the mathematical induction method, we ultimately obtain the optimal error estimate both in time and space for the first time. Moreover, this convergence result can directly be extended to the second-order pressure decoupled method \cite{Wang2022} and ``zero-energy-contribution'' method \cite{Ma2025}, by using the analytical techniques therein.

The rest of the paper is organized as follows. In Section \ref{sec-main}, we present the numerical scheme and the main results of this work, including the optimal convergence rates, unconditional energy stability, unique solvability, and charge conservation. In Section \ref{sec-error}, we provide the rigorous error analysis for the proposed scheme. Several numerical examples are illustrated in Section \ref{sec-numerical} to demonstrate the validity of the theoretical results.

\section{Main results}\label{sec-main}

In this section, we will present the fully discrete scheme of the EHD system \eqref{PDE1}-\eqref{PDE4} together with the main results of energy stability and optimal convergence rates.

\subsection{Variational formulation}

For any integer $k\geq0$ and real number $1\leq p \leq \infty$, we adopt the conventional Sobolev spaces $W^{k,p}(\Omega)$ and, as general, denote by  $H^k(\Omega):=W^{k,2}(\Omega)$ and $L^p(\Omega):=W^{0,p}(\Omega)$. The corresponding vector-valued spaces are denoted by ${\mathbf W}^{k,p}(\Omega) :=[W^{k,p}(\Omega)]^d$ $(d=2,3)$, as well as $\H^k(\Omega):={\mathbf W}^{k,2}(\Omega)$ and $\L^p:={\mathbf W}^{0, p}(\Omega)$.
In the later analysis, we denote by $\big(\cdot,\cdot\big)$ the standard $L^2$ inner product and by $\|\cdot\|_{W^{k,p}}, \|\cdot\|_{H^k}, \|\cdot\|_{L^p}$ the norm of the corresponding (both scalar- and vector- valued) spaces, respectively. 
Moreover, we need the following admissible spaces for $(\u, p)$
\begin{align*}
	\H^1_0(\Omega):= &\{ \v\in\H^1(\Omega):~ \v=\0 ~ \mbox{on} ~ \partial\Omega \},	\\
	L^2_0(\Omega):= &\{ v\in L^2(\Omega):~ \int_{\Omega} v \rmd x = 0 \}. 
\end{align*}

With these notations, it is seen that the exact solution $(\phi, \rho, \u, p)$ of the EHD system \eqref{PDE1}-\eqref{PDE4} satisfies 
\begin{align}
	& \epsilon\big(\nabla\phi, \nabla\varphi\big) = \big(\rho, \varphi\big),	\label{var1}		\\
	& \big(\partial_t\rho, \chi\big) - \big(\rho\u, \nabla\chi\big) + D\big(\nabla\rho, \nabla\chi\big) 
		+ \frac{\sigma}{\epsilon}\big(\rho, \chi\big) = 0, \label{var2} \\
	& \big(\partial_t\u, \v\big) + b(\u, \u, \v) + \eta\big(\nabla\u, \nabla\v\big) - \big(p, \nabla\cdot\v\big) 
		+ \big(\rho\nabla\phi, \v\big) = 0,	\label{var3}	\\
	& \big(\nabla\cdot\u, q\big) = 0, \label{var4} 
\end{align}
for all $(\varphi, \chi, \v, q) \in (H^1(\Omega), H^1(\Omega), \H^1_0(\Omega), 	L^2_0(\Omega))$ and every $t>0$.
By using $\nabla\cdot\u=0$, we define the following trilinear term \cite{Temam2001}
\begin{align}\label{trilinear}
	b\big(\u, \v, \w\big) = 
	\frac12\big(\u\cdot\nabla\v, \w\big)
	-\frac12\big(\u\cdot\nabla\w, \v\big), \qquad \mbox{for}~ \u, \v, \w\in\H^1_0(\Omega), 
\end{align}
and it is easy to verify $b(\u, \v, \v) =0$.

\begin{thm}
	The variational formulation \eqref{var1}-\eqref{var4} is energy-stable and satisfies the following energy inequality
	\begin{align*}
		\frac{\rmd }{\rmd t} \Big(\epsilon\|\nabla\phi\|_{L^2}^2 + \|\u\|_{L^2}^2\Big) \leq 0.
	\end{align*}

\end{thm}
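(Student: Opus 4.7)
The plan is to obtain the desired estimate by carefully testing the four variational equations with the right functions and cancelling the coupling terms. The two ``natural'' test functions are $\v=\u$ in the momentum equation \eqref{var3} and $\chi=\phi$ in the charge density equation \eqref{var2}; the Poisson relation \eqref{var1} will be used both statically (to compare $(\rho,\phi)$ with $\|\nabla\phi\|_{L^2}^2$) and after time differentiation (to rewrite $(\partial_t\rho,\phi)$ as a time derivative of $\|\nabla\phi\|_{L^2}^2$).

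First I would test \eqref{var3} with $\v=\u$. The trilinear term vanishes because $b(\u,\u,\u)=0$, and the pressure term vanishes by \eqref{var4} (taking $q=p$), so that
\begin{align*}
	\tfrac12\tfrac{\rmd}{\rmd t}\|\u\|_{L^2}^2 + \eta\|\nabla\u\|_{L^2}^2 + \big(\rho\nabla\phi,\u\big) = 0.
\end{align*}
Next I would test \eqref{var2} with $\chi=\phi$. The convective term becomes $-(\rho\u,\nabla\phi)=-(\rho\nabla\phi,\u)$, which will precisely cancel the Coulomb coupling term above.

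The key step is to rewrite the remaining three terms of \eqref{var2} using \eqref{var1}. Differentiating \eqref{var1} in time and choosing $\varphi=\phi$ gives $(\partial_t\rho,\phi) = \epsilon(\nabla\partial_t\phi,\nabla\phi) = \tfrac{\epsilon}{2}\tfrac{\rmd}{\rmd t}\|\nabla\phi\|_{L^2}^2$; choosing $\varphi=\phi$ in \eqref{var1} itself gives $(\rho,\phi)=\epsilon\|\nabla\phi\|_{L^2}^2$; and choosing $\varphi=\rho$ in \eqref{var1} yields $\epsilon(\nabla\phi,\nabla\rho)=\|\rho\|_{L^2}^2$, hence $D(\nabla\rho,\nabla\phi)=\tfrac{D}{\epsilon}\|\rho\|_{L^2}^2$. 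Substituting these identities, \eqref{var2} with $\chi=\phi$ becomes
\begin{align*}
	\tfrac{\epsilon}{2}\tfrac{\rmd}{\rmd t}\|\nabla\phi\|_{L^2}^2 - \big(\rho\nabla\phi,\u\big) + \tfrac{D}{\epsilon}\|\rho\|_{L^2}^2 + \sigma\|\nabla\phi\|_{L^2}^2 = 0.
\end{align*}

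Adding the two displayed identities, the $(\rho\nabla\phi,\u)$ terms cancel and one obtains
\begin{align*}
	\tfrac12\tfrac{\rmd}{\rmd t}\bigl(\epsilon\|\nabla\phi\|_{L^2}^2 + \|\u\|_{L^2}^2\bigr) = -\eta\|\nabla\u\|_{L^2}^2 - \tfrac{D}{\epsilon}\|\rho\|_{L^2}^2 - \sigma\|\nabla\phi\|_{L^2}^2 \leq 0,
\end{align*}
which is even sharper than the claimed inequality. The main (mild) obstacle is recognising the three ways to exploit \eqref{var1} so that every term produced by testing \eqref{var2} with $\phi$ has a clean interpretation; beyond that, all manipulations are routine. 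A small technical caveat is that the argument requires enough regularity to differentiate \eqref{var1} in time and to use $\rho$ as a test function in \eqref{var1}, but these are admissible for the smooth exact solution contemplated here.
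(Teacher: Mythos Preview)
Your proposal is correct and follows essentially the same approach as the paper: test \eqref{var3} with $\u$, test \eqref{var2} with $\phi$, and exploit \eqref{var1} in three ways (with $\varphi=\phi$, $\varphi=\rho$, and after time differentiation with $\varphi=\phi$) to convert every term into a signed quantity, after which the Coulomb coupling $(\rho\nabla\phi,\u)$ cancels. The paper arrives at exactly the same identity with the same dissipation terms $-\eta\|\nabla\u\|_{L^2}^2 - \tfrac{D}{\epsilon}\|\rho\|_{L^2}^2 - \sigma\|\nabla\phi\|_{L^2}^2$, so your ``sharper'' observation is also what the paper obtains.
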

\begin{proof}
	Taking $\chi=\phi$ in \eqref{var2} yields
	\begin{align}\label{PDE-energy1}
		\big(\partial_t\rho, \phi\big) - \big(\rho\u, \nabla\phi\big) + D\big(\nabla\rho, \nabla\phi\big) 
		+ \frac{\sigma}{\epsilon}\big(\rho, \phi\big)=0.
	\end{align}
	Next, by taking $\varphi=\dfrac{D}{\epsilon}\rho$ and $\varphi = \dfrac{\sigma}{\epsilon}\phi$ in \eqref{var1}, we obtain
	\begin{align}
		D\big(\nabla\phi, \nabla\rho\big) &= \frac{D}{\epsilon}\|\rho\|_{L^2}^2,	\\
		\frac{\sigma}{\epsilon}\big(\rho, \phi\big) &= \sigma\|\nabla\phi\|_{L^2}^2,	
	\end{align} 
	respectively. Then, differentiating \eqref{PDE1} with respect to $t$, by testing $\phi$ we have
	\begin{align}
			\big(\partial_t\rho, \phi\big) = \epsilon\big(\partial_t\nabla\phi, \nabla\phi\big).
	\end{align}
	Letting $\v=\u$ in \eqref{var3}, we arrive at 
	\begin{align}\label{PDE-energy5}
		\big(\partial_t\u, \u\big) + \eta\|\nabla\u\|_{L^2}^2 + \big(\rho\nabla\phi, \u\big) = 0.
	\end{align}
	A combination of \eqref{PDE-energy1}-\eqref{PDE-energy5} leads to 
	\begin{align*}
		\epsilon\big(\partial_t\nabla\phi, \nabla\phi\big) + \frac{D}{\epsilon}\|\rho\|_{L^2}^2	
		+ \sigma\|\nabla\phi\|_{L^2}^2 + \big(\partial_t\u, \u\big)  +  \eta\|\nabla\u\|_{L^2}^2  = 0.
	\end{align*}
	Then, we obtain
	\begin{align*}
		\frac12\frac{\rmd}{\rmd t}  \Big(\epsilon\|\nabla\phi\|_{L^2}^2 + \|\u\|_{L^2}^2\Big)
		= - \Big(\frac{D}{\epsilon}\|\rho\|_{L^2}^2	+ \sigma\|\nabla\phi\|_{L^2}^2 + \eta\|\nabla\u\|_{L^2}^2  \Big) \leq 0, 
	\end{align*}
	i.e., the assertion.
\end{proof}

\subsection{Numerical schemes}

We first present the finite element spaces. Let $\mathcal{T}_h$ be a quasi-uniform partition of domain $\Omega$ into triangles $K_j$ in $\mathbb{R}^2$ (tetrahedra in $\mathbb{R}^3$), $j=1,2,\cdots,N_x$ with the mesh size $h=\max_{j}{\rm diam}K_j$. Then for any integer $r\geq2$, we define the finite element spaces as follow:
\begin{align*}
	\mathcal{H}^r&:=\{v_h\in C(\Omega):\, v_h|_{K_j} \in P_r(K_j), \ \ \forall K_j\in\mathcal{T}_h\},	\\
	\mathcal{U}^r&:=\{\v_h\in\H_0^1(\Omega): \, \v_h|_{K_j} \in [P_r(K_j)]^d, \ \ \forall K_j\in\mathcal{T}_h\},	\\
	\mathcal{P}^{r-1}&:=\{q_h\in L_0^2(\Omega):\, q_h|_{K_j} \in P_{r-1}(K_j), \ \ \forall K_j\in\mathcal{T}_h\},
\end{align*}
where $P_r(K_j)$ denotes the polynomial space of degree $r$ on $K_j$. Here, we utilize the Taylor--Hood element spaces $(\mathcal{U}, \mathcal{P})$ to solve the velocity-pressure pair $(\u, p)$, which obviously satisfies the discrete inf-sup condition
\begin{align*}
	\inf_{0\neq q_h\in\mathcal{P}^{r-1}} \sup_{{\bf 0}\neq {\v}_h\in \mathcal{U}^r} 
	\frac{\big(q_h, \nabla\cdot\v_h\big)}{\|\v_h\|_{\H^1(\Omega)}}
	\geq C,
\end{align*}
for a positive constant $C$.

For the time discretization, we adopt the uniform partition of temporal interval $[0, T]$ as $\tau = \frac{T}{N_T}$ for a fixed positive integer $N_T$, and define $t^n = n\tau, n=0,1,...,N_T$.
As usual, we denote by $v^n:=v(\cdot, t^n)$ the value at $t^n$ for any function $v$.
Then, the fully discrete scheme of variational formulation \eqref{var1}-\eqref{var4} is to find $(\phi_h^{n+1}, \rho_h^{n+1}, \u_h^{n+1}, p_h^{n+1}) \in (\mathcal{H}^r \times \mathcal{H}^r \times \mathcal{U}^r \times \mathcal{P}^{r-1})$ such that it satisfies 
\begin{align}
	&\epsilon\big(\nabla\phi_h^{n+1}, \nabla\varphi_h\big) - \big(\rho_h^{n+1}, \varphi_h\big) =0, \label{scheme1}		\\
	&\big(D_{\tau}\rho_h^{n+1}, \chi_h\big) - \big(\widetilde{\rho}_h^{n+1}\u_h^{n+1}, \nabla\chi_h\big) 
		+ D\big(\nabla\rho_h^{n+1}, \nabla\chi_h\big) + \frac{\sigma}{\epsilon}\big(\rho_h^{n+1}, \chi_h\big) = 0, \label{scheme2} 	\\
	&\big(D_\tau\u_h^{n+1}, \v_h\big) + b\big(\widetilde{\u}_h^{n+1}, \u_h^{n+1}, \v_h\big) 
		+ \eta\big(\nabla\u_h^{n+1}, \nabla\v_h\big) - \big(p_h^{n+1}, \nabla\cdot\v_h\big) 	
		+ \big(\widetilde{\rho}_h^{n+1} \nabla\phi_h^{n+1}, \v_h\big) =0, 	\label{scheme3}		\\
	&\big(\nabla\cdot\u_h^{n+1}, q_h\big) = 0, 	\label{scheme4} 
\end{align}
for $(\varphi_h, \chi_h, \v_h, q_h) \in (\mathcal{H}^r \times \mathcal{H}^r \times \mathcal{U}^r \times \mathcal{P}^{r-1})$ and $n=0,1,..., N_T-1$. We define 
\begin{equation}\label{notation}
	D_\tau v^{n+1} = 
	\left\{
	\begin{array}{ll}
		\dfrac{v^{n+1}-v^n}{\tau}, 		&n=0,	\vspace{0.1in}\\
		\dfrac{3v^{n+1}-4v^n +v^{n-1}}{2\tau}, 		&n\geq 1,
	\end{array}
	\right.
	\quad \mbox{and} \quad
	\widetilde{v}^{n+1} = 
	\left\{
	\begin{array}{ll}
		v^n, 		&n=0,	\vspace{0.1in}\\
		2v^n - v^{n-1}, 		&n\geq 1,
	\end{array}
	\right.
\end{equation}
for any (scalar- or vector- valued) function $v$.
The initial data is set as
\begin{align}
	\rho_h^0 = P_h\rho^0	
	\quad 	\mbox{and}	\quad
	\u_h^0 = \Q_h\u^0,	\label{initial-num}
\end{align}
where $P_h$ and $\Q_h$ are the standard $L^2$ and Ritz projection operators (defined in Section \ref{subsec-projection}), respectively, and $\phi_h^0$ is defined as the solution of 
\begin{align}
	(\nabla\phi_h^0, \nabla\varphi_h) = (\rho_h^0,\varphi_h), \quad \forall \varphi_h\in\mathcal{H}^r.
\end{align}
Finally, we let $\int_{\Omega}\phi_h^n\rmd x=0$ to guarantee the uniqueness for all $n=0,1,\cdots, N_T$.

In this paper, we assume that the EHD system \eqref{PDE1}-\eqref{PDE4} admits a unique solution $(\phi, \rho, \u, p)$ satisfying 
\begin{align}
	&\|\phi\|_{L^\infty(0,T;H^{r+1})} + \|\rho\|_{L^\infty(0,T;H^{r+1})} 
		+ \|\partial_t\rho\|_{L^\infty(0,T;H^{r+1})} + \|\partial_{tt}\rho\|_{L^\infty(0,T;L^2)}  
		+ \|\partial_{ttt}\rho\|_{L^\infty(0,T;L^2)}		\nonumber \\ 
	&\hspace{0.0in}
		+ \|\u\|_{L^\infty(0,T;{W}^{r+1, 3})} + \|\partial_t\u\|_{L^\infty(0,T;H^{r+1})} 	+ \|\partial_{tt}\u\|_{L^\infty(0,T;L^2)} + \|\partial_{ttt}\u\|_{L^\infty(0,T;L^2)}		
		+ \|p\|_{L^{\infty}(0, T;H^r)}	\nonumber \\ 
	&\hspace{0.0in}
	\leq K.		\label{regularity-ass}
\end{align}
We now present the main result of this paper in the following theorem, and the detailed proof will be shown in Section \ref{sec-error}.
\begin{thm}\label{thm-error}
	Assume that the EHD system \eqref{PDE1}-\eqref{PDE4} has the unique solution $(\phi, \rho, \u, p)$ satisfying the regularity assumption \eqref{regularity-ass}. Then, the discrete system \eqref{scheme1}-\eqref{scheme4} admits a unique solution $(\phi_h^n, \rho_h^n, \u_h^n, p_h^n)$ for $n=1,2,\cdots,N_T$. 
	Moreover, there exist some small positive constants $\tau_0$ and $h_0$, for $\tau\leq \tau_0$, $\tau\leq h^\frac12$, and $h< h_0$, it holds
	\begin{align*}
		\max_{1\leq n\leq N_T}\Big( \|\phi^n-\phi_h^n\|_{L^2} +  \|\rho^n-\rho_h^n\|_{L^2}
			+ \|\u^n-\u_h^n\|_{L^2}  \Big) 
			\leq C(\tau^2 + h^{r+1}),
	\end{align*}
where $C$ is a positive constant independent of $h$ and $\tau$.
\end{thm}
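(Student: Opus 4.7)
The plan is to decompose the errors via finite element projections, run an induction on the time index, and critically exploit the $L^2$-dissipation of $\rho$ to bypass the accuracy loss that a traditional energy-only analysis would incur. I would introduce the Ritz projection $\Q_h$ for $\u$, the $L^2$ projection $P_h$ for $\rho$, and a suitable elliptic projection $\pi_h$ for $\phi$, and split each error as projection part plus discrete part, writing, e.g., $\phi^n - \phi_h^n = \theta_\phi^n + \xi_\phi^n$ and similarly for $\rho$ and $\u$. The $\theta$ terms are controlled at order $h^{r+1}$ by standard approximation theory, so only the $\xi$ parts require analysis. Before the induction I would verify unique solvability at each step: since $\widetilde{\u}_h^{n+1}$ and $\widetilde{\rho}_h^{n+1}$ are explicit extrapolations the step is linear, the Poisson block is coercive, the $\rho$-block is coercive (Laplacian plus positive reaction), and the Stokes-like block is well posed by the Taylor--Hood inf-sup condition.

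The induction hypothesis is that $\|\xi_\rho^m\|_{L^2} + \|\xi_{\u}^m\|_{L^2} = O(\tau^2 + h^{r+1})$ at all prior levels $m\leq n$; combined with the inverse inequality and $\tau\leq h^{1/2}$, this yields the discrete $L^\infty$ control on $\u_h^m$ and $\rho_h^m$ that all nonlinear estimates rely on. To advance to step $n+1$ I test the error equations in a carefully chosen order. First, test the $\rho$-error equation by $\xi_\rho^{n+1}$; with the standard BDF2 G-identity
\begin{align*}
(D_\tau \xi^{n+1}, \xi^{n+1}) = \tfrac{1}{4\tau}\bigl(E^{n+1} - E^n + \|\xi^{n+1} - 2\xi^n + \xi^{n-1}\|_{L^2}^2\bigr),
\end{align*}
where $E^m = \|\xi^m\|_{L^2}^2 + \|2\xi^m - \xi^{m-1}\|_{L^2}^2$, combined with the diffusive coercivity $D\|\nabla \xi_\rho^{n+1}\|_{L^2}^2$ and the reactive $\tfrac{\sigma}{\epsilon}\|\xi_\rho^{n+1}\|_{L^2}^2$, I obtain a self-contained bound on $\xi_\rho^{n+1}$ in terms of truncation errors, projection errors, and the convective mismatch $\widetilde{\rho}^{n+1}\u^{n+1} - \widetilde{\rho}_h^{n+1}\u_h^{n+1}$; the mismatch is split via the triangle inequality and absorbed into $D\|\nabla \xi_\rho^{n+1}\|_{L^2}^2$ using Young and the discrete $L^\infty$ bounds from the induction.

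Next, I test the Poisson error equation and use elliptic $H^2$-regularity to lift the just-obtained bound to $\|\xi_\phi^{n+1}\|_{L^2} + h\|\nabla\xi_\phi^{n+1}\|_{L^2} \leq C(\|\xi_\rho^{n+1}\|_{L^2} + h^{r+1})$. Only then do I test the momentum error equation by $\xi_{\u}^{n+1}$, using the skew-symmetry $b(\widetilde{\u}_h^{n+1}, \xi_{\u}^{n+1}, \xi_{\u}^{n+1}) = 0$ and the cancellation of the pressure against the discretely divergence-free test space. The Coulomb term $\rho^{n+1}\nabla\phi^{n+1} - \widetilde{\rho}_h^{n+1}\nabla\phi_h^{n+1}$ is split into $\rho$- and $\phi$-contributions, both already controlled by the previous two steps. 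Summing over $n$ and applying discrete Gronwall closes the induction at the optimal rate $O(\tau^2 + h^{r+1})$.

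The main obstacle is exactly the Coulomb-coupling term $(\rho\nabla\phi, \v)$: a classical energy-mimicking analysis would test the $\rho$-equation by $\xi_\phi$ (mirroring the continuous identity), which would force one to pay for $\phi$-derivatives through $\xi_\rho$ and create a loop with the velocity estimate that degrades the convergence rate. The decoupling engineered above — independent $L^2$ control of $\xi_\rho$ through its own dissipation, a one-way lift to $\xi_\phi$ via elliptic regularity, and only then the velocity estimate — is precisely the insight highlighted in the introduction and is what produces the optimal rate. A secondary subtlety is the mild CFL-type condition $\tau \leq h^{1/2}$, used only to propagate the discrete $L^\infty$ velocity and charge-density bounds across the induction so that the nonlinear convective and Coulomb terms remain under control.
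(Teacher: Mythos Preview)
Your overall strategy---testing the $\rho$-error equation by $\xi_\rho$ rather than by $\xi_\phi$, lifting to $\xi_\phi$ through the elliptic relation, and then combining with the velocity estimate before Gronwall---is precisely the key mechanism the paper uses. However, two concrete choices in your proposal would prevent the optimal rate. First, using the $L^2$ projection $P_h$ for $\rho$ leaves the diffusion consistency term $D\bigl(\nabla(\rho^{n+1}-P_h\rho^{n+1}),\nabla\xi_\rho^{n+1}\bigr)$ nonzero, and since $\|\nabla(\rho-P_h\rho)\|_{L^2}=O(h^r)$ only, Young's inequality produces a residual $O(h^{2r})$, hence an $O(h^r)$ final $L^2$ rate. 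The paper takes the Ritz projection $R_h$ for both $\rho$ and $\phi$ so that this term vanishes identically; the time-derivative consistency $(D_\tau(R_h\rho-\rho),\xi_\rho)$ that appears instead is $O(h^{r+1})$ and harmless. Likewise, for $\u$ the paper uses the Stokes projection $(\Q_h,Q_h)$, not a plain Ritz projection, so that $\Q_h\u$ is discretely divergence-free and the pressure error drops out when testing by $\xi_{\u}^{n+1}$ together with $q_h=e_p^{n+1}$.

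Second, two coupled mis-statements need correction. The $\rho$-estimate is \emph{not} self-contained: the scheme \eqref{scheme2} carries the implicit velocity $\u_h^{n+1}$, so after splitting the convective mismatch you are left with $\|\xi_{\u}^{n+1}\|_{L^2}^2$ (and, as in the paper, a factor $h^{2-d/3}\|\nabla\xi_{\u}^{n+1}\|_{L^2}^2$) on the right; the one-way sequence $\rho\to\phi\to\u$ you describe does not close. The paper instead adds the $\u$-, $\rho$-, and $\phi$-inequalities together and applies Gronwall to the sum. Finally, under $\tau\le h^{1/2}$ the inverse inequality gives $\|\xi\|_{L^\infty}\le Ch^{-d/2}(\tau^2+h^{r+1})=O(h^{-1/2})$ in three dimensions, so your claimed discrete $L^\infty$ control fails for $d=3$. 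The paper only extracts an $H^1$ bound $\|\nabla\u_h^m\|_{L^2}+\|\nabla\rho_h^m\|_{L^2}\le C$ (hence $L^6$ via Sobolev), which suffices for all nonlinear estimates; where a pointwise bound is genuinely needed it writes $\rho_h=R_h\widetilde\rho-\widetilde e_\rho$ and places the $L^\infty$ norm on the smooth projected part.
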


\subsection{Energy stability and unique solvability}

In this subsection we will provide the unconditional energy stability and unique solvability for the fully discrete scheme \eqref{scheme1}-\eqref{scheme4}.

\begin{thm}\label{thm-energy}
	The numerical scheme \eqref{scheme1}-\eqref{scheme4} is uniquely solvable and unconditionally energy-stable, which satisfies the discrete energy dissipation property
	\begin{align*}
		\mathcal{E}_h^{n+1} - \mathcal{E}_h^n \leq 0,
	\end{align*} 
	for $n=1,2, ..., N_T-1$ with
	\begin{align*}
		\mathcal{E}_h^{n+1} := \epsilon\|\nabla\phi_h^{n+1}\|_{L^2}^2 + \epsilon\|2\nabla\phi_h^{n+1}-\nabla\phi_h^n\|_{L^2}^2 
			+ \|\u_h^{n+1}\|_{L^2}^2 + \|2\u_h^{n+1}-\u_h^n\|_{L^2}^2.
	\end{align*} 
	Moreover, the discrete charge density $\rho_h^n$ satisfies $\int_\Omega\rho_h^n \rmd x=0$ for $n=1,2, \cdots, N_T$.
\end{thm}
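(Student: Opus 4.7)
The plan is to mimic at the discrete level the continuous energy proof just given, exploiting the fact that the scheme uses the \emph{same} extrapolated factor $\widetilde{\rho}_h^{n+1}$ in both \eqref{scheme2} and \eqref{scheme3}, so that the electric--mechanical coupling terms cancel exactly. My first step would be to establish the discrete analogue of $(\partial_t\rho,\phi)=\epsilon(\partial_t\nabla\phi,\nabla\phi)$: since \eqref{scheme1} holds at every time level with a common test space, applying the BDF2 combination of \eqref{scheme1} at indices $n+1,n,n-1$ with the fixed test function $\varphi_h=\phi_h^{n+1}$ yields
\[
(D_\tau\rho_h^{n+1},\phi_h^{n+1})=\epsilon(D_\tau\nabla\phi_h^{n+1},\nabla\phi_h^{n+1}).
\]

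Next I would test \eqref{scheme2} with $\chi_h=\phi_h^{n+1}$ and use \eqref{scheme1} with $\varphi_h=\tfrac{D}{\epsilon}\rho_h^{n+1}$ and $\varphi_h=\tfrac{\sigma}{\epsilon}\phi_h^{n+1}$ to rewrite the diffusion and relaxation contributions as $\tfrac{D}{\epsilon}\|\rho_h^{n+1}\|_{L^2}^2$ and $\sigma\|\nabla\phi_h^{n+1}\|_{L^2}^2$, respectively. Testing \eqref{scheme3} with $\v_h=\u_h^{n+1}$ and \eqref{scheme4} with $q_h=p_h^{n+1}$, the skew-symmetry $b(\widetilde{\u}_h^{n+1},\u_h^{n+1},\u_h^{n+1})=0$ removes the convection and the divergence-free constraint eliminates the pressure. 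Summing the two identities, the coupling terms $(\widetilde{\rho}_h^{n+1}\u_h^{n+1},\nabla\phi_h^{n+1})$ and $(\widetilde{\rho}_h^{n+1}\nabla\phi_h^{n+1},\u_h^{n+1})$ cancel identically. I would then apply the standard BDF2 identity
\[
2\tau(D_\tau a^{n+1},a^{n+1})=\|a^{n+1}\|^2+\|2a^{n+1}-a^n\|^2-\|a^n\|^2-\|2a^n-a^{n-1}\|^2+\|a^{n+1}-2a^n+a^{n-1}\|^2
\]
to both $\epsilon(D_\tau\nabla\phi_h^{n+1},\nabla\phi_h^{n+1})$ and $(D_\tau\u_h^{n+1},\u_h^{n+1})$; after multiplying by $2\tau$ and dropping the non-negative dissipation $\tfrac{D}{\epsilon}\|\rho_h^{n+1}\|_{L^2}^2+\sigma\|\nabla\phi_h^{n+1}\|_{L^2}^2+\eta\|\nabla\u_h^{n+1}\|_{L^2}^2$ together with the BDF2 second-difference terms, what remains is exactly $\mathcal{E}_h^{n+1}-\mathcal{E}_h^n\le 0$.

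Charge conservation follows by taking $\chi_h=1\in\mathcal{H}^r$ in \eqref{scheme2}: the convection and diffusion terms vanish and one is left with a BDF2 recursion for the mean $m^n:=\int_\Omega\rho_h^n\,\rmd x$ with strictly positive leading coefficient $3/(2\tau)+\sigma/\epsilon$. Since $m^0=(\rho_0,1)=0$ and a direct calculation at the BDF1 startup forces $m^1=0$, induction gives $m^n=0$ for every $n$. For unique solvability I note that the step-$(n+1)$ system is linear and square in $(\phi_h^{n+1},\rho_h^{n+1},\u_h^{n+1},p_h^{n+1})$, so it suffices to show that the associated homogeneous problem (with vanishing history) admits only the trivial solution; running the energy identity above with zero history yields $\mathcal{E}_h^{n+1}\le 0$, forcing $\u_h^{n+1}=\0$, $\nabla\phi_h^{n+1}=\0$, hence $\phi_h^{n+1}=0$ by the mean-zero normalization, then $\rho_h^{n+1}=0$ from \eqref{scheme1}, and finally $p_h^{n+1}=0$ from the discrete inf--sup condition applied to \eqref{scheme3}. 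The principal subtlety will be verifying the exact cancellation of the electric--mechanical coupling, which hinges on the \emph{common} extrapolation $\widetilde{\rho}_h^{n+1}$ being shared between \eqref{scheme2} and \eqref{scheme3}; once that is in hand, the remainder is standard BDF2 bookkeeping.
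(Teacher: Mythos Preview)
Your proposal is correct and follows essentially the same route as the paper for the energy-stability and unique-solvability parts: testing \eqref{scheme2} with $\phi_h^{n+1}$, \eqref{scheme3} with $\u_h^{n+1}$, \eqref{scheme4} with $p_h^{n+1}$, rewriting the diffusion/relaxation terms via \eqref{scheme1} with $\varphi_h=\tfrac{D}{\epsilon}\rho_h^{n+1}$ and $\varphi_h=\tfrac{\sigma}{\epsilon}\phi_h^{n+1}$, establishing $(D_\tau\rho_h^{n+1},\phi_h^{n+1})=\epsilon(D_\tau\nabla\phi_h^{n+1},\nabla\phi_h^{n+1})$, and then invoking the BDF2 algebraic identity. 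Your uniqueness argument spells out a bit more detail (inf--sup for the pressure) but is the same in spirit.

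The one place you diverge is charge conservation. You test \eqref{scheme2} with $\chi_h=1$ and run an induction on the mean $m^n$; this works, but the paper instead tests \eqref{scheme1} with $\varphi_h=1$ to obtain $(\rho_h^{n+1},1)=\epsilon(\nabla\phi_h^{n+1},\nabla 1)=0$ in a single line, with no recursion or initial-step check needed. Your approach buys nothing extra here and requires verifying the base cases, so the paper's route is cleaner; otherwise the two proofs coincide.
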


\begin{proof}
	It is direct to obtain the discrete charge conservation law 
	\begin{align*}
		\big(\rho_h^{n+1}, 1\big) = \epsilon\big(\nabla\phi_h^{n+1}, \nabla 1\big) = 0,
	\end{align*} 
	by taking $\varphi_h=1$ in \eqref{scheme1}.
	
	Next, we prove the unconditional energy stability. By taking $\chi_h = \phi_h^{n+1}$, $\v_h=\u_h^{n+1}$ and $q_h = p_h^{n+1}$ in \eqref{scheme2}-\eqref{scheme4}, respectively, we arrive at
	\begin{align}
		\big(D_{\tau}\rho_h^{n+1}, \phi_h^{n+1}\big) - \big(\widetilde{\rho}_h^{n+1}\u_h^{n+1}, \nabla\phi_h^{n+1}\big) 
			+ D\big(\nabla\rho_h^{n+1}, \nabla\phi_h^{n+1}\big) 
			+ \frac{\sigma}{\epsilon}\big(\rho_h^{n+1}, \phi_h^{n+1}\big) &= 0, 	\label{num-ener1}\\
		\big(D_\tau\u_h^{n+1}, \u_h^{n+1}\big) + \eta\|\nabla\u_h^{n+1}\|_{L^2}^2 
			+ (\widetilde{\rho}_h^{n+1} \nabla\phi_h^{n+1}, \u_h^{n+1}) 	&=0.		\label{num-ener2}
	\end{align}
	In \eqref{scheme1}, we take $\varphi_h = \dfrac{D}{\epsilon} \rho_h^{n+1}$ and $\dfrac{\sigma}{\epsilon}\phi_h^{n+1}$, respectively, and then obtain
	\begin{align}
		D\big(\nabla\phi_h^{n+1}, \nabla\rho_h^{n+1}\big) &= \frac{D}{\epsilon}\|\rho_h^{n+1}\|_{L^2}^2, 		\label{num-ener3}	\\
		\frac{\sigma}{\epsilon}\big(\rho_h^{n+1}, \phi_h^{n+1}\big) &= \sigma\|\nabla\phi_h^{n+1}\|_{L^2}^2.			\label{num-ener4}	
	\end{align}	
	Further, at $t^{n+1}$, $t^n$ and $t^{n-1}$, we all choose $\varphi_h = \phi_h^{n+1}$ and then have
	\begin{align*}
		\epsilon\big(\nabla\phi_h^{n+1}, \nabla\phi_h^{n+1}\big) &= \big(\rho_h^{n}, \phi_h^{n+1}\big),		\\
		\epsilon\big(\nabla\phi_h^{n}, \nabla\phi_h^{n+1}\big) &= \big(\rho_h^{n}, \phi_h^{n+1}\big),		\\
		\epsilon\big(\nabla\phi_h^{n-1}, \nabla\phi_h^{n+1}\big) &= \big(\rho_h^{n-1}, \phi_h^{n+1}\big),	
	\end{align*}
	which implies
	\begin{align}
		(D_\tau\rho_h^{n+1}, \phi_h^{n+1}) = \epsilon(D_\tau\nabla\phi_h^{n+1}, \nabla\phi_h^{n+1}).	\label{num-ener5}
	\end{align}
	
	A combination of \eqref{num-ener1}-\eqref{num-ener5} leads to 
	\begin{align*}
		\epsilon\big(D_\tau\nabla\phi_h^{n+1}, \nabla\phi_h^{n+1}\big) 
			+ \frac{D}{\epsilon}\|\rho_h^{n+1}\|_{L^2}^2 + \sigma\|\nabla\phi_h^{n+1}\|_{L^2}^2
		+\big(D_\tau\u_h^{n+1}, \u_h^{n+1}\big) + \eta\|\nabla\u_h^{n+1}\|_{L^2}^2 = 0.
	\end{align*}
	With the help of the identities
	\begin{align*}
		(a-b)a &= \frac12[a^2-b^2+(a-b)^2],	\\
		(3a-4b+c)a &= \frac12[a^2-b^2 + (2a-b)^2 - (2b-c)^2 + (a-2b+c)^2],
	\end{align*}
	for $n=0$ we obtain 
	\begin{align*}
		\frac1{2\tau}\Big(\epsilon\|\nabla\phi_h^1\|_{L^2}^2 - \|\nabla\phi_h^0\|_{L^2} 
			+ \|\u_h^1\|_{L^2}^2 - \|\u_h^0\|_{L^2}^2\Big) \leq 0,
	\end{align*}
	which gives the boundedness of numerical solution $\nabla\phi_h^1$ and $\u_h^1$.
	For $n\geq 1$, we have
	\begin{align*}
		\frac1{4\tau}\Big(&\epsilon\|\nabla\phi_h^{n+1}\|_{L^2}^2 - \epsilon\|\nabla\phi_h^{}\|_{L^2}^2  
			+ \epsilon\|2\nabla\phi_h^{n+1}-\nabla\phi_h^n\|_{L^2}^2 - \epsilon\|2\nabla\phi_h^{n}-\nabla\phi_h^{n-1}\|_{L^2}^2 	\\
		&+ \|\u_h^{n+1}\|_{L^2}^2 - \|\u_h^{n}\|_{L^2}^2 
			+ \|2\u_h^{n+1}-\u_h^n\|_{L^2}^2 -  \|2\u_h^{n}-\u_h^{n-1}\|_{L^2}^2 \Big) \leq 0,
	\end{align*}
	i.e., the energy stability $\mathcal{E}_h^{n+1} - \mathcal{E}_h^n \leq 0$, $n=1,2,...,N_T-1$.
	
	Finally, the unconditional energy stability implies that the homogeneous linear equation group \eqref{scheme1}-\eqref{scheme4} only admits the zero solution, which also yields the unique solvability. This completes the proof of Theorem \ref{thm-energy}.
\end{proof}

\section{Optimal error estimates}\label{sec-error}

In this section, we will present the proof of Theorem \ref{thm-error}, where the unique solvability has been proven in Theorem \ref{thm-energy}, and thus it remains to prove 
the optimal convergence order.
Throughout this paper, we denote by $C$ a generic positive constant independent of $\tau$ and $h$, which could be different at different occurrences.

\subsection{Preliminaries}\label{subsec-projection}

We first introduce some essential projections together with their properties.
The standard $L^2$ projection $P_h: L^2(\Omega)\rightarrow \mathcal{H}^r$ is defined as: for $v\in L^2(\Omega)$
\begin{align}\label{L2pro}
	\big(v-P_hv, \varphi_h\big) = 0, 	\qquad 	\forall \varphi_h\in\mathcal{H}^r.
\end{align}
The classic Ritz projection $R_h: H^1(\Omega)\rightarrow \mathcal{H}^r$ is: for $v\in H^1(\Omega)$
\begin{align}\label{Ritzpro}
	\big(\nabla(v- R_hv), \nabla\varphi_h\big) = 0, \qquad \forall  \varphi_h\in\mathcal{H}^r,
\end{align}
with $\int_{\Omega}(v-R_hv)\rmd x=0$.
The Stokes projection is denoted by $(\Q_h, Q_h): \H^1_0(\Omega)\times L^2_0(\Omega) \rightarrow \mathcal{U}^r \times \mathcal{P}^{r-1}$, defined as
\begin{align}\label{Stokepro}
	\eta\big(\nabla(\u - \Q_h\u), \nabla\v\big) - \big(p-Q_h p, \nabla\cdot\v_h\big) &= 0, 	\\
	\big(\nabla\cdot(\u - \Q_h\u), q_h\big) &=0, 
\end{align}
for all $(\v_h, q_h) \in \mathcal{U}^r \times \mathcal{P}^{r-1}$.

\begin{lem}[\cite{Brenner2008, Girault1986, Thomee2006, Wheeler1973}]
	For the above projections, the following estimates are recalled:
	\begin{align}
		&\|v - P_hv\|_{L^2} + h\|\nabla(v-P_hv)\|_{L^2} \leq Ch^{r+1}\|v\|_{H^{r+1}},	\label{L2-ets}	\\
		&\|v - R_hv\|_{L^s} + h\|v-R_hv\|_{W^{1,s}} \leq Ch^{r+1} \|v\|_{W^{r+1, s}}, \qquad 2\leq s\leq \infty, 	\label{Ritz-ets}	\\
		&\|\v-\Q_h\v\|_{L^s} + h\|\v-\Q_h\|_{W^{1,s}}  \leq Ch^{r+1}(\|\v\|_{W^{r+1,s}} + \|p\|_{W^{r,s}}), 	\qquad 1<s<\infty \label{Stoke-ets},
	\end{align}
	where $C>0$ is a constant independent of $h$.
\end{lem}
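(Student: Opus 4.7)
The plan is to establish the three approximation bounds in sequence, combining a standard local interpolation estimate with either the Galerkin/orthogonality definition of the projection or an appropriate duality argument. Throughout I would fix a quasi-interpolant $I_h$ (Scott--Zhang type for the scalar spaces, and the natural mixed analogue on the Taylor--Hood pair) satisfying the element-wise Bramble--Hilbert bound $\|v-I_hv\|_{W^{m,s}}\leq Ch^{r+1-m}\|v\|_{W^{r+1,s}}$ for $m=0,1$, and would make free use of the inverse inequality available on the quasi-uniform mesh $\mathcal{T}_h$.

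For the $L^2$ projection, the bound $\|v-P_hv\|_{L^2}\leq Ch^{r+1}\|v\|_{H^{r+1}}$ is immediate from the orthogonality definition \eqref{L2pro} together with the interpolation estimate. The gradient bound then follows by inserting $I_hv$, applying the triangle inequality, and controlling $\|\nabla(I_hv-P_hv)\|_{L^2}$ by the inverse inequality $Ch^{-1}\|I_hv-P_hv\|_{L^2}$, which is in turn dominated by the $L^2$ estimate just derived. For the Ritz projection \eqref{Ritzpro}, C\'ea's lemma applied in the $H^1$ semi-norm supplies the optimal energy estimate at once, and the extra power of $h$ in the $L^2$ bound is obtained from an Aubin--Nitsche duality argument against the auxiliary Neumann problem $-\Delta w = v-R_hv$, whose $H^2$-regularity on the convex polyhedral domain $\Omega$ provides the decisive gain.

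The main technical obstacle is the extension of the Ritz bound to general $2\leq s\leq\infty$, and in particular the $s=\infty$ endpoint; here I would invoke the weighted discrete Green's function technique of Rannacher--Scott as recorded in \cite{Wheeler1973, Thomee2006}, where the standing hypothesis $r\geq 2$ is precisely what allows the logarithmic factor appearing in the piecewise-linear case to be absorbed into the constant. Intermediate values of $s$ then follow from Riesz--Thorin interpolation between the $s=2$ and $s=\infty$ endpoints.

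For the Stokes projection, the discrete inf-sup condition enjoyed by the Taylor--Hood pair $(\mathcal{U}^r,\mathcal{P}^{r-1})$ reduces the $H^1$-velocity and $L^2$-pressure estimate to a direct application of the standard mixed-method convergence theorem of \cite{Girault1986, Brenner2008}. The $L^s$ bound for the velocity then follows from a Stokes-duality argument entirely analogous to Aubin--Nitsche, exploiting the full $\H^2\times H^1$ regularity of the steady Stokes problem on the convex domain, while the $W^{1,s}$ and $L^\infty$ endpoints once again rest on the pointwise Stokes estimates in \cite{Girault1986}. Since each of these ingredients is classical, I would present the argument as a sketch with pointers to the cited references rather than reproducing the full analysis.
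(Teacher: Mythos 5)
The paper does not prove this lemma at all: it is stated as a recollection of classical projection estimates and discharged entirely by the citations to \cite{Brenner2008, Girault1986, Thomee2006, Wheeler1973}. Your sketch is a faithful outline of how those references actually establish the three bounds (orthogonality plus a quasi-interpolant and an inverse estimate for $P_h$; C\'ea and Aubin--Nitsche on the convex domain for the Ritz projection in $H^1$ and $L^2$; weighted-norm/discrete Green's function techniques for the $L^\infty$ endpoint; the inf-sup-based mixed theory and Stokes duality for $\Q_h$), so there is no conflict with the paper --- you have simply supplied the argument the authors chose to omit. One point worth tightening: invoking Riesz--Thorin to pass from the $s=2$ and $s=\infty$ endpoints to intermediate $s$ is not quite right as stated, because the source norm $\|v\|_{W^{r+1,s}}$ varies with $s$ as well, so what you need is interpolation of the linear operator $I-R_h$ between the couples $(W^{r+1,2},W^{r+1,\infty})$ and $(L^2,L^\infty)$, i.e.\ genuine operator interpolation on Sobolev scales; the cleaner classical route (as in Brenner--Scott) is to prove $W^{1,s}$-stability of the Ritz projection for $1<s<\infty$ and deduce the $L^s$ and $W^{1,s}$ error bounds directly from best-approximation. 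With that caveat your sketch is sound and at the level of detail appropriate for a recalled lemma.
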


We also need the following inequalities in the later analysis.
\begin{lem}[\cite{Brenner2008}]
	The embedding inequality and the inverse inequality hold:
	\begin{align}
		&\|v\|_{L^l} \leq C\|v\|_{H^1}, \qquad v\in H^1(\Omega) ,	\label{embedding-ine}	\\
		&\|v\|_{W^{m, s}} \leq Ch^{n-m+\frac{d}{s}-\frac{d}{p}}\|v\|_{W^{n,p}}, 
		\qquad v\in \mathcal{H}^r~ \text{or} ~\mathcal{U}^r, \label{inverse-ine}
	\end{align}
	with $1\leq l \leq 6$, $0\leq n\leq m \leq1$, and $1\leq p\leq s \leq \infty$.
	The Poincar\'{e} inequality is
	\begin{equation}
		\begin{array}{l l }
			\|v\|_{L^2} \leq C(\|\nabla v\|_{L^2} + |(v,1)|),	 &\forall v\in H^1(\Omega),	
			\vspace{0.05in}\\
			\|\v\|_{L^2} \leq C\|\nabla\v\|_{L^2},	\qquad &\forall \v\in\H_0^1(\Omega).
		\end{array}
	\end{equation}
\end{lem}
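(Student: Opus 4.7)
The plan is to treat this lemma as three classical results and outline a proof for each, since they are standard tools whose proofs I would sketch rather than reproduce in full detail.

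For the embedding inequality $\|v\|_{L^l} \leq C\|v\|_{H^1}$, I would invoke the Sobolev embedding theorem on a bounded Lipschitz domain. In dimension $d=3$, the critical exponent is $2^\ast = 2d/(d-2)=6$, so that $H^1(\Omega)\hookrightarrow L^6(\Omega)$ continuously; the range $2\leq l\leq 6$ then follows by interpolation, using H\"older's inequality to write $\|v\|_{L^l}\leq \|v\|_{L^2}^{1-\theta}\|v\|_{L^6}^{\theta}$ with $\theta$ determined by $1/l=(1-\theta)/2+\theta/6$. For $1\leq l<2$, boundedness of $\Omega$ and H\"older give $\|v\|_{L^l}\leq |\Omega|^{1/l-1/2}\|v\|_{L^2}$. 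For $d=2$ the embedding holds for all finite $l$, so the stated range is also covered. To establish $H^1\hookrightarrow L^{2^\ast}$ itself, I would combine the Gagliardo--Nirenberg--Sobolev inequality on $\mathbb{R}^d$ with an extension operator for the Lipschitz domain $\Omega$.

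For the inverse inequality on the finite element spaces $\mathcal{H}^r$ and $\mathcal{U}^r$, I would use the standard scaling argument. On a reference simplex $\widehat K$, the polynomial space $P_r(\widehat K)$ is finite-dimensional, so all the Sobolev norms $\|\cdot\|_{W^{m,s}(\widehat K)}$ and $\|\cdot\|_{W^{n,p}(\widehat K)}$ are equivalent, yielding a constant $\widehat C$ with $\|\widehat v\|_{W^{m,s}(\widehat K)}\leq \widehat C\,\|\widehat v\|_{W^{n,p}(\widehat K)}$ for every $\widehat v\in P_r(\widehat K)$. Pulling this back to a physical element $K_j$ via the affine map $F_{K_j}(\widehat x)=B_{K_j}\widehat x+b_{K_j}$, the chain rule gives the scaling factors $|B_{K_j}|\sim h_{K_j}$ and $|\det B_{K_j}|\sim h_{K_j}^d$, producing a local bound of the form $\|v\|_{W^{m,s}(K_j)}\leq C\, h_{K_j}^{n-m+d/s-d/p}\|v\|_{W^{n,p}(K_j)}$. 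I would then sum over all elements: for $s\geq p$ the bound is assembled through the $\ell^s$--$\ell^p$ embedding of sequences, and quasi-uniformity $h_{K_j}\sim h$ extracts the exponent uniformly. The vector-valued version for $\mathcal{U}^r$ follows componentwise.

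For the Poincar\'e inequalities I would treat the two statements separately. The version on $\H^1_0(\Omega)$ follows from enclosing $\Omega$ in a slab $\{0<x_1<L\}$, extending $\v$ by zero, and integrating $v(x)=\int_0^{x_1}\partial_1 v\,dt$ and applying Cauchy--Schwarz component-by-component. The mean-value version on $H^1(\Omega)$ is the Poincar\'e--Wirtinger inequality: I would argue by contradiction, producing a sequence $v_k$ with $\|v_k\|_{L^2}=1$ and $\|\nabla v_k\|_{L^2}+|(v_k,1)|\to 0$; the Rellich--Kondrachov theorem yields an $L^2$-convergent subsequence whose limit is a nonzero constant with zero mean, a contradiction. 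The main obstacle across the lemma is the inverse inequality, because the exponent $n-m+d/s-d/p$ must come out correctly for the full range $1\leq p\leq s\leq \infty$ and $0\leq n\leq m\leq 1$, which requires the scaling bookkeeping to be executed carefully and uniformly in $h$, whereas the two analytic inequalities rest squarely on textbook results.
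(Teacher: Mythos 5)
Your sketches are correct and are precisely the standard textbook arguments; the paper itself gives no proof of this lemma, simply recalling it with a citation to the finite element and Sobolev space literature (Brenner--Scott et al.), where the Sobolev embedding, the scaling-plus-norm-equivalence derivation of the inverse inequality, and the compactness proof of the Poincar\'e--Wirtinger inequality are carried out exactly as you describe. There is nothing to compare beyond noting that your treatment fills in what the paper delegates to its references.
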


\subsection{Error equations}

By using the projections defined in \eqref{L2pro}-\eqref{Stokepro}, we change the EHD system \eqref{PDE1}-\eqref{PDE4} to the projection form at $t^{n+1}$: 
\begin{align}
	&\epsilon\big(\nabla R_h\phi^{n+1}, \nabla\varphi_h\big) - \big(R_h\rho^{n+1}, \varphi_h\big) = \big(\rho^{n+1}-R_h\rho^{n+1}, \varphi_h\big), \label{projection1}		\\
	&\big(D_{\tau}R_h\rho^{n+1}, \chi_h\big) - \big(\widetilde{\rho}^{n+1}\u^{n+1}, \nabla\chi_h\big) 
			+ D\big(\nabla R_h\rho^{n+1}, \nabla\chi_h\big) + \frac{\sigma}{\epsilon}\big(R_h\rho^{n+1}, \chi_h\big) 	\nonumber	\\
	&\hspace{2in}		
		=  \frac{\sigma}{\epsilon}\big(R_h\rho^{n+1} - \rho^{n+1}, \chi_h\big) + \mathfrak{T}^{n+1}_{\rho}(\chi_h) , \label{projection2} 	\\
	&\big(D_\tau\Q_h\u^{n+1}, \v_h\big) + b\big(\widetilde{\u}^{n+1}, \u^{n+1}, \v_h\big) 
		+ \eta\big(\nabla\Q_h\u^{n+1}, \nabla\v_h\big) - \big(Q_hp^{n+1}, \nabla\cdot\v_h\big) 	\nonumber	\\
	&\hspace{2in}
	+ \big(\widetilde{\rho}^{n+1} \nabla\phi^{n+1}, \v_h\big) = \mathfrak{T}^{n+1}_{u}(\v_h), 	\label{projection3}		\\
	&\big(\nabla\cdot\Q_h\u^{n+1}, q_h\big) = 0, 	\label{projection4} 
\end{align}
 for all $(\varphi_h, \chi_h, \v_h, q_h) \in (\mathcal{H}^r \times \mathcal{H}^r \times \mathcal{U}^r \times \mathcal{P}^{r-1})$ and $n=0,1,..., N_T-1$, where the truncation error terms are calculated as
\begin{align*}
	\mathfrak{T}^{n+1}_{\rho}(\chi_h) 
		&= \big(D_\tau R_h\rho^{n+1} - \partial_t\rho^{n+1}, \chi_h\big) - \big((\widetilde{\rho}^{n+1}- \rho^{n+1})\u^{n+1}, \nabla\chi_h\big),	\\
	\mathfrak{T}^{n+1}_{u}(\v_h) 
		&= \big(D_\tau \Q_h\u^{n+1} - \partial_t\u^{n+1}, \v_h\big) + b\big(\widetilde{\u}^{n+1}-\u^{n+1}, \u^{n+1}, \v_h\big)	\\
		&+ \big( (\widetilde{\rho}^{n+1} - \rho^{n+1})\nabla\phi^{n+1}, \v_h\big).
\end{align*}

By denoting the error functions by
\begin{equation*}
	\begin{array}{l l}
	e_\rho^n = R_h\rho^n - \rho_h^n, 	\qquad 		e_\phi^n = R_h\phi^n - \phi_h^n,	\qquad
	\e_u^n = \Q_h\u^n - \u_h^n,				  \qquad 	  e_p^n = Q_h p^n -p_h^n,
	\end{array}
\end{equation*}
making difference between projection equation \eqref{projection1}-\eqref{projection4} and numerical scheme \eqref{scheme1}-\eqref{scheme4} leads to the following error equation:
\begin{align}
	&
	\epsilon\big(\nabla e_\phi^{n+1}, \nabla\varphi_h\big) - \big(e_\rho^{n+1}, \varphi_h\big) 
		= \big(\rho^{n+1}-R_h\rho^{n+1}, \varphi_h\big), \label{error1}		\\
	&
	\big(D_{\tau}e_\rho^{n+1}, \chi_h\big) - \big(\widetilde{\rho}^{n+1}\u^{n+1} 
		- \widetilde{\rho}_h^{n+1}\u_h^{n+1}, \nabla\chi_h\big) 
		+ D\big(\nabla e_\rho^{n+1}, \nabla\chi_h\big) \nonumber	\\
	&\hspace{0.5in}		
	+ \frac{\sigma}{\epsilon}\big(e_\rho^{n+1}, \chi_h\big) 	
		=  \frac{\sigma}{\epsilon}\big(R_h\rho^{n+1} - \rho^{n+1}, \chi_h\big) + \mathfrak{T}_{\rho}^{n+1}(\chi_h) , \label{error2} 	\\
	&
	\big(D_\tau\e_u^{n+1}, \v_h\big) + \Big( b\big(\widetilde{\u}^{n+1}, \u^{n+1}, \v_h\big) 
		- b\big(\widetilde{\u}_h^{n+1}, \u_h^{n+1}, \v_h\big) \Big)  
		+ \eta\big(\nabla\e_u^{n+1}, \nabla\v_h\big) \nonumber	\\
	&\hspace{0.5in}
	- \big(e_p^{n+1}, \nabla\cdot\v_h\big) 	
		+ \big(\widetilde{\rho}^{n+1} \nabla\phi^{n+1} - \widetilde{\rho}_h^{n+1} \nabla\phi_h^{n+1}, \v_h\big) 
		= \mathfrak{T}_{u}^{n+1}(\v_h), 	\label{error3}		\\
	&\big(\nabla\cdot\e_u^{n+1}, q_h\big) = 0, 	\label{error4} 
\end{align}
for all $(\varphi_h, \chi_h, \v_h, q_h) \in (\mathcal{H}^r \times \mathcal{H}^r \times \mathcal{U}^r \times \mathcal{P}^{r-1})$ and $n=0,1,..., N_T-1$.

\subsection{Estimates for $\mbox{\boldmath $n$}\,{\bf =0}$}
Since the numerical scheme \eqref{scheme1}-\eqref{scheme4} is a multi-step formulation, we need to obtain the error estimate at $t=t^1$.
For the first step $n=0$, we actually  have 
\begin{align}
	&
	\epsilon\big(\nabla e_\phi^1, \nabla\varphi_h\big) - \big(e_\rho^1, \varphi_h\big) 
		= \big(\rho^1-R_h\rho^1, \varphi_h\big), \label{error1-0}		\\
	&
	\frac1\tau\big(e_\rho^1-e_\rho^0, \chi_h\big) - \big(\rho^0\u^1 - \rho_h^0\u_h^1, \nabla\chi_h\big) 
		+ D\big(\nabla e_\rho^1, \nabla\chi_h\big) \nonumber	\\
	&\hspace{0.5in}		
	+ \frac{\sigma}{\epsilon}\big(e_\rho^1, \chi_h\big) 	
		=  \frac{\sigma}{\epsilon}\big(R_h\rho^1 - \rho^1, \chi_h\big) 	
		+ \mathfrak{T}^1_{\rho}(\chi_h) , \label{error2-0} 	\\
	&
	\frac1\tau\big(\e_u^1, \v_h\big) + \Big( b\big(\u^0, \u^1, \v_h\big) - b\big(\u_h^0, \u_h^1, \v_h\big) \Big)  
		+ \eta\big(\nabla\e_u^1, \nabla\v_h\big) \nonumber	\\
	&\hspace{0.5in}
	- \big(e_p^1, \nabla\cdot\v_h\big) 	
		+ \big(\rho^0 \nabla\phi^1 -\rho_h^0\nabla\phi_h^1, \v_h\big) = \mathfrak{T}^1_{u}(\v_h), 	\label{error3-0}		\\
	&\big(\nabla\cdot\e_u^1, q_h\big) = 0, 	\label{error4-0} 
\end{align}
for all $(\varphi_h, \chi_h, \v_h, q_h) \in (\mathcal{H}^r \times \mathcal{H}^r \times \mathcal{U}^r \times \mathcal{P}^{r-1})$. \vspace{0.2in}

{\bf (1) Estimate for $\e_u^1$:} Adopting $\v_h = \e_u^1$ and $q_h = e_p^1$ in \eqref{error3-0} and \eqref{error4-0}, respectively, we arrive at 
\begin{align}
	&\frac1\tau\|\e_u^1\|_{L^2}^2 + \eta\|\nabla\e_u^1\|_{L^2}^2	\nonumber	\\
		&= -\Big(b\big(\u^0, \u^1, \e_u^1\big) - b\big(\u_h^0, \u_h^1, \e_u^1\big)\Big)	
			- \big(\rho^0\nabla\phi^1 - \rho_h^0\nabla\phi_h^1, \e_u^1\big)	 + \mathfrak{T}^1_{u}(\e_u^1)	\nonumber	\\
		&=: \sum_{i=1}^{3} I_{1i}.	\label{I1}
\end{align}

For $I_{11}$, from definition \eqref{trilinear} we have
\begin{align}
	I_{11} =& - \bigg\{ \Big[ \frac12\big(\u^0\cdot\nabla\u^1, \e_u^1\big) - \frac12\big(\u^0\cdot\nabla\e_u^1, \u^1\big) \Big]
				- \Big[ \frac12\big(\u_h^0\cdot\nabla\u_h^1, \e_u^1\big) - \frac12\big( \u_h^0\cdot\nabla\e_u^1, \u_h^1 \big) \Big] \bigg\}	\nonumber \\
			=& -\frac12 \bigg\{ \Big[ \big(\u^0\cdot\nabla\u^1, \e_u^1\big) - \big(\u_h^0\cdot\nabla\u_h^1, \e_u^1\big) \Big]
					- \Big[ \big(\u^0\cdot\nabla\e_u^1, \u^1\big) - \big(\u_h^0\cdot\nabla\e_u^1, \u_h^1\big) \Big] \bigg\}			\nonumber	\\
			=& -\frac12 \bigg\{ \Big[ \big((\u^0-\Q_h\u^0)\cdot\nabla\u^1, \e_u^1\big) + \big(\e_u^0\cdot\nabla\u^1, \e_u^1\big) 
					+ \big(\u_h^0\cdot\nabla(\u^1-\Q_h\u^1), \e_u^1\big) 	\nonumber	\\
				& + \big(\u_h^0\cdot\nabla\e_u^1, \e_u^1\big) \Big] 
					- \Big[ \big((\u^0-\Q_h\u^0)\cdot\nabla\e_u^1, \u^1\big) + \big(\e_u^0\cdot\nabla\e_u^1, \u^1\big)
					+ \big(\u_h^0\cdot\nabla\e_u^1, \u^1-\Q_h\u^1\big) 	\nonumber	\\
				& + \big(\u_h^0\cdot\nabla\e_u^1, \e_u^1\big) \Big]		\bigg\}		\nonumber	\\
			=& -\frac12 \Big[ \big((\u^0-\Q_h\u^0)\cdot\nabla\u^1, \e_u^1\big) + \big(\u_h^0\cdot\nabla(\u^1-\Q_h\u^1), \e_u^1\big)
					- \big((\u^0-\Q_h\u^0)\cdot\nabla\e_u^1, \u^1\big)		\nonumber	\\
				&	- \big(\u_h^0\cdot\nabla\e_u^1, \u^1-\Q_h\u^1\big)  \Big]	
					\qquad	(\mbox{Via definition \eqref{initial-num},}~\e_u^0 = \0)	\nonumber	\\
			\leq& \frac12\Big( \|\u^0 - \Q_h\u^0\|_{L^2}\|\nabla\u^1\|_{L^\infty}\|\e_u^1\|_{L^2}
					+ \|\nabla\cdot\u_h^0\|_{L^2}\|\u^1-\Q_h\u^1\|_{L^3}\|\e_u^1\|_{L^6}	\nonumber	\\
				& + \|\u^0-\Q_h\u^0\|_{L^2}\|\nabla\e_u^1\|_{L^2}\|\u^1\|_{L^\infty}
					+ 2\|\u_h^0\|_{L^6}\|\nabla\e_u^1\|_{L^2}\|\u^1-\Q_h\u^1\|_{L^3}	\Big)	\nonumber	\\
			\leq& C\big(h^{2r+2}+\|\e_u^1\|_{L^2}^2\big) + \frac{\eta}{4}\|\nabla\e_u^1\|_{L^2}^2, 	\label{I11}
\end{align}
where we have used the integration by parts as follows
\begin{align*}
	&\big(\u_h^0\cdot\nabla(\u^1-\Q_h\u^1), \e_u^1\big) 	\\
	&= - \big( \nabla\cdot\u_h^0, (\u^1-\Q_h\u^1)\cdot\e_u^1	\big) - \big(\u_h^0\cdot\nabla\e_u^1, \u^1-\Q_h\u^1\big)	\\
	&\leq \|\nabla\cdot\u_h^0\|_{L^2}\|\u^1-\Q_h\u^1\|_{L^3}\|\e_u^1\|_{L^6} + \|\u_h^0\|_{L^6}\|\nabla\e_u^1\|_{L^2}\|\u^1-\Q_h\u^1\|_{L^3}.
\end{align*}

Term $I_{12}$ can be controlled by
\begin{align}
	I_{12} =& -\Big[ \big((\rho^0 - R_h\rho^0)\nabla\phi^1, \e_u^1\big) + \big(e_\rho^0\nabla\phi^1, \e_u^1\big)
				+ \big(\rho_h^0\nabla(\phi^1-R_h\phi^1), \e_u^1\big) + \big(\rho_h^0\nabla e_\phi^1, \e_u^1\big) \Big] \nonumber	\\
			\leq& \|\rho^0-R_h\rho^0\|_{L^2}\|\nabla\phi^1\|_{L^\infty}\|\e_u^1\|_{L^2} 
				+ \|e_\rho^0\|_{L^2}\|\nabla\phi^1\|_{L^\infty}\|\e_u^1\|_{L^2} 	\nonumber	\\
				&+ \|\nabla\cdot\e_u^1\|_{L^2}\|\rho_h^0\|_{L^\infty}\|\phi^1-R_h\phi^1\|_{L^2}
				+ \|\nabla\rho_h^0\|_{L^4}\|\phi^1-R_h\phi^1\|_{L^2}\|\e_u^1\|_{L^4}	\nonumber	\\
				& + \|\rho_h^0\|_{L^\infty}\|\nabla e_\phi^1\|_{L^2}\|\e_u^1\|_{L^2}	\nonumber	\\
			\leq& C\Big(h^{r+1}\|\e_u^1\|_{L^2} + \|e_\rho^0\|_{L^2}\|\e_u^1\|_{L^2} + h^{r+1}\|\nabla\cdot\e_u^1\|_{L^2}
				+ h^{r+1}\|\e_u^1\|_{L^4} + \|\nabla e_\phi^1\|_{L^2}\|\e_u^1\|_{L^2}\Big)		\nonumber	\\
			\leq& C\Big(h^{2r+2} + \|\e_u^1\|_{L^2}^2 + \|\nabla e_\phi^1\|_{L^2}^2 \Big) 
				+ \frac{\eta}{4}\|\nabla\e_u^1\|_{L^2}^2,	\label{I12}
\end{align}
with $\|e_\rho^0\|_{L^2} = \|P_h\rho^0-\rho_h^0\|_{L^2}\leq Ch^{r+1}$, where we have used 
\begin{align*}
	&\big(\rho_h^0\nabla(\phi^1-R_h\phi^1), \e_u^1\big) 	\\
	&= - \big(\nabla\cdot\e_u^1, \rho_h^0(\phi^1-R_h\phi^1)\big) - \big(\nabla\rho_h^0(\phi^1-R_h\phi^1), \e_u^1\big) \\
	&\leq	\|\nabla\cdot\e_u^1\|_{L^2}\|\rho_h^0\|_{L^\infty}\|\phi^1-R_h\phi^1\|_{L^2}
		+ \|\nabla\rho_h^0\|_{L^4}\|\phi^1-R_h\phi^1\|_{L^2}\|\e_u^1\|_{L^4},
\end{align*}
and by using the Sobolev inequality, for $r\geq2$ it holds
\begin{align*}
	\|\rho_h^0\|_{L^\infty} 
	\leq& \|\rho_h^0\|_{W^{1,4}} = \|P_h\rho^0\|_{W^{1,4}}	\\
	\leq& \|P_h\rho^0 - R_h\rho^0\|_{W^{1,4}} + \|R_h\rho^0\|_{W^{1,4}} \\
	\leq& Ch^{-\frac d4}\|P_h\rho^0 - R_h\rho^0\|_{H^1} + C		\\
	\leq& Ch^{-\frac d4}\big(\|P_h\rho^0 - \rho^0\|_{H^1}  + \|\rho^0 - R_h\rho^0\|_{H^1} \big) +C	\\
	\leq& Ch^{-\frac d4}Ch^r + C	\\
	\leq& C+1.
\end{align*}

The truncated error term $I_{13}$ can directly be estimated as 
\begin{align}
	I_{13} =&
		\big(\frac1\tau(\Q_h\u^1-\Q_h\u^0) - \partial_t\u^1, \e_u^1\big) + b\big(\u^0-\u^1, \u^1, \e_u^1\big)
	+ \big( (\rho^0 - \rho^1)\nabla\phi^1, \e_u^1\big)	\nonumber	\\
	=& \frac1\tau\big(\Q_h(\u^1-\u^0) - (\u^1-\u^0), \e_u^1\big) + \big(\frac{\u^1-\u^0}{\tau}-\partial_t\u^1, \e_u^1\big)	\nonumber \\
		&+ \frac12\big((\u^0-\u^1)\cdot\nabla\u^1, \e_u^1\big) - \frac12\big((\u^0-\u^1)\cdot\nabla\e_u^1, \u^1\big)	
		+ \big((\rho^0-\rho^1)\nabla\phi^1, \e_u^1\big)	\nonumber	\\	
	\leq& C\big(h^{2r+2} + \|\e_u^1\|_{L^2}^2\big) + C\tau\|\e_u^1\|_{L^2}^2 + C\tau\|\nabla\e_u^1\|_{L^2}^2 .	\label{I13}
\end{align}

Consequently, substituting \eqref{I11}-\eqref{I13} into \eqref{I1} yields
\begin{align*}
	\frac1\tau\|\e_u^1\|_{L^2}^2 + \eta\|\nabla\e_u^1\|_{L^2}^2 
		\leq C\big(h^{2r+2} + \|\e_u^1\|_{L^2}^2+ \|\nabla e_\phi^1\|_{L^2}^2 \big) 
			+ \big(\frac{\eta}2+C\tau\big)\|\nabla\e_u^1\|_{L^2}^2 + C\tau\|\e_u^1\|_{L^2}, 
\end{align*}
and then, for some $\tau$ small enough we arrive at
\begin{align*}
	\|\e_u^1\|_{L^2}^2 + \frac{\eta\tau}{4}\|\nabla\e_u^1\|_{L^2}^2
	\leq& C\tau h^{2r+2} + C\tau\|\e_u^1\|_{L^2}^2 + C\tau\|\nabla e_\phi^1\|_{L^2}^2
		+ C\tau^2\|\e_u^1\|_{L^2}		\nonumber	\\
	\leq& C\tau h^{2r+2} + C\tau\|\e_u^1\|_{L^2}^2 + C\tau\|\nabla e_\phi^1\|_{L^2}^2
		+ C\tau^4 + \frac14\|\e_u^1\|_{L^2}^2.
\end{align*}
which in turn yields 
\begin{align}\label{I1-final}
	\frac12\|\e_u^1\|_{L^2}^2 + \frac{\eta\tau}{4}\|\nabla\e_u^1\|_{L^2}^2 \leq C \big(\tau h^{2r+2} + \tau^4\big) 
	 	+ C\tau\|\nabla e_\phi^1\|_{L^2}^2.
\end{align}
\vspace{0.0in}

{\bf (2) Estimate for $e_\rho^1$:} Taking $\chi_h=e_\rho^1$ in \eqref{error2-0} gives 
\begin{align}\label{I2}
	&\frac{1}{2\tau}\Big(\|e_\rho^1\|_{L^2}^2 - \|e_\rho^0\|_{L^2}^2 + \|e_\rho^1-e_\rho^0\|_{L^2}^2\Big)
		+ D\|\nabla e_\rho^1\|_{L^2}^2 + \frac{\sigma}{\epsilon}\|e_\rho^1\|_{L^2}^2	\nonumber	\\
	&=
		\big(\rho^0\u^1-\rho_h^0\u_h^1, \nabla e_\rho^1\big) + \frac{\sigma}{\epsilon}\big(R_h\rho^1-\rho^1, e_\rho^1\big)
			+ \mathfrak{T}_\rho^1(e_\rho^1)		\nonumber	\\
	&:= \sum_{i=1}^{3}I_{2i}.
\end{align}

It is direct to estimate these terms as
\begin{align}
	I_{21}=&\big((\rho^0-R_h\rho^0)\u^1, \nabla e_\rho^1\big) + \big( e_\rho^0\u^1, \nabla e_\rho^1\big)
		+ \big(\rho_h^0(\u^1-\Q_h\u^1), \nabla e_\rho^1\big) + \big(\rho_h^0\e_u^1, \nabla e_\rho^1\big) 	\nonumber	\\
		\leq&
			\|\rho^0-R_h\rho^0\|_{L^2}\|\u^1\|_{L^\infty}\|\nabla e_\rho^1\|_{L^2} 
				+ \|e_\rho^0\|_{L^2}\|\u^1\|_{L^\infty}\|\nabla e_\rho^1\|_{L^2}	\nonumber	\\
			&
			+ \|\rho_h^0\|_{L^\infty}\|\u^1-\Q_h\u^1\|_{L^2}\|\nabla e_\rho^1\|_{L^2}
			+ \|\rho_h^0\|_{L^\infty}\|\e_u^1\|_{L^2}\|\nabla e_\rho^1\|_{L^2}	\nonumber	\\
		\leq&
			C\big(h^{2r+2} + \|\e_u^1\|_{L^2}^2\big) + \frac{D}{4}\|\nabla e_\rho^1\|_{L^2}^2,	\label{I21}	\\
	I_{22}\leq& 
			\frac{\sigma}{\epsilon}\|R_h\rho^1-\rho^1\|_{L^2}\|e_\rho^1\|_{L^2} \leq Ch^{2r+2} 
				+ \frac{\sigma}{4\epsilon}\|e_\rho^1\|_{L^2}^2, 		\label{I22}			\\
	I_{23}=& 
			\frac{1}{\tau}\big(R_h\rho^1-R_h\rho^0 - (\rho^1-\rho^0), e_\rho^1\big)
				+ \big(\frac{1}{\tau}(\rho^1-\rho^0) - \partial_t\rho^1, e_\rho^1\big) 
				+ \big((\rho^0-\rho^1)\u^1, \nabla e_\rho^1\big)		\nonumber	\\
			\leq&
				Ch^{2r+2} + \frac{\sigma}{4\epsilon}\|e_\rho^1\|_{L^2}^2 + C\tau\|e_\rho^1\|_{L^2} 
					+ C\tau\|\nabla e_\rho^1\|_{L^2}.	\label{I23}
\end{align}

Then, by using \eqref{I21}-\eqref{I23} we can rewrite \eqref{I2} as 
\begin{align*}
	&\frac{1}{2\tau}\Big(\|e_\rho^1\|_{L^2}^2 - \|e_\rho^0\|_{L^2}^2 + \|e_\rho^1-e_\rho^0\|_{L^2}^2\Big)
		+ D\|\nabla e_\rho^1\|_{L^2}^2 + \frac{\sigma}{\epsilon}\|e_\rho^1\|_{L^2}^2	\\
	&\leq
		C\big(h^{2r+2} + \|\e_u^1\|_{L^2}^2\big) + \frac{D}{4}\|\nabla e_\rho^1\|_{L^2}^2 
			+ \frac{\sigma}{2\epsilon}\|e_\rho^1\|_{L^2}^2 + C\tau\|e_\rho^1\|_{L^2}
			+ C\tau\|\nabla e_\rho^1\|_{L^2}	\\
	&=
		C\big(h^{2r+2} + \|\e_u^1\|_{L^2}^2\big) 
			+ (\frac{D}{4} + C\tau) \|\nabla e_\rho^1\|_{L^2}^2 
			+ (\frac{\sigma}{2\epsilon} +  C\tau) \|e_\rho^1\|_{L^2}^2,
\end{align*}
which in turn leads to
\begin{align}\label{I2-final}
	\frac12\|e_\rho^1\|_{L^2} + \frac D2\tau\|\nabla e_\rho^1\|_{L^2}^2 + \frac{\sigma}{4\epsilon}\tau\|e_\rho^1\|_{L^2}^2
		\leq Ch^{2r+2} + C\tau h^{2r+2} + C\tau\|\e_u^1\|_{L^2}^2,	
\end{align}
for some small $\tau$ satisfying $C\tau \leq \min\{\frac{D}{4}, \frac{\sigma}{4\epsilon}\}$. \vspace{0.2in}

{\bf (3) Estimate for $e_\phi^1$:} Adopting $\varphi_h = e_\phi^1$ \eqref{error1-0}, we obtain
\begin{align*}
	\epsilon\|\nabla e_\phi^1\|_{L^2}^2 
		=& \big(e_\rho^1, e_\phi^1\big) + \big(\rho^1-R_h\rho^1, e_\phi^1\big)		\\
		\leq&
			\big(\|e_\rho^1\|_{L^2} + \|\rho^1-R_h\rho^1\|_{L^2}\big)\|e_\phi^1\|_{L^2} \\
		\leq&
			C\big(\|e_\rho^1\|_{L^2} + h^{r+1}\big) \|\nabla e_\phi^1\|_{L^2}	\\
		\leq&
			C\big(\|e_\rho^1\|_{L^2}^2 + h^{2r+2}\big) + \frac{\epsilon}{2}\|\nabla e_\phi^1\|_{L^2}^2, 
\end{align*}
which implies
\begin{align}
	\|\nabla e_\phi^1\|_{L^2}^2  \leq C\big(\|e_\rho^1\|_{L^2}^2 + h^{2r+2}\big).	\label{I3-final}
\end{align}
\vspace{0.0in}

{\bf (4) Final estimate at $t=t^1$:} By combining \eqref{I1-final}, \eqref{I2-final} and \eqref{I3-final} we arrive at
\begin{align*}
	&\frac12\|\e_u^1\|_{L^2}^2 + \frac12\|e_\rho^1\|_{L^2}^2 + \tau\Big(\frac{\eta}{4}\|\nabla\e_u^1\|_{L^2}^2 
		+ \frac{D}{2}\|\nabla e_\rho^1\|_{L^2}^2 + \frac{\sigma}{4\epsilon}\|e_\rho^1\|_{L^2}^2\Big)		\\
	&\leq
		C\big(\tau h^{2r+2} + h^{2r+2} + \tau^4\big) + C\tau\|\e_u^1\|_{L^2}^2 + C\tau\|e_\rho^1\|_{L^2}^2, 
\end{align*}
which leads to 
\begin{align*}
	\|\e_u^1\|_{L^2}^2 + \|e_\rho^1\|_{L^2}^2 \leq C\big(h^{2r+2} + \tau^4\big),
\end{align*}
for some small $\tau\leq \frac{1}{4C}$, and furthermore,
\begin{align*}
	\|\nabla e_\phi^1\|_{L^2}^2 \leq C\big(h^{2r+2} + \tau^4\big) 
\end{align*}
by using \eqref{I3-final}.

\subsection{Estimates for $\mbox{\boldmath $n$}\,{\bf \geq1}$}

We now proceed with the proof by utilizing the mathematical induction method.
Suppose that for $m\leq n$, it holds 
\begin{align}\label{induction-ass}
	\|\e_u^m\|_{L^2} + \|e_\rho^m\|_{L^2} \leq C\big(h^{r+1} + \tau^2\big),
\end{align}
which consequently leads to
\begin{align}
	&\|\nabla\u_h^m\|_{L^2} + \|\nabla \rho_h^m\|_{L^2}	\nonumber	\\
	&\leq
		\|\nabla \e_u^m\|_{L^2} + \|\nabla\Q_h\u^m\|_{L^2} + \|\nabla e_\rho^m\|_{L^2} + \|\nabla R_h\rho^m\|_{L^2}	\nonumber	\\
	&\leq
		Ch^{-1}(\| \e_u^m\|_{L^2} + \|e_\rho^m\|_{L^2}) + C(\|\nabla\u^m\|_{L^2} + \|\nabla\rho^m\|_{L^2})	\nonumber 	\\
	&\leq
		C\big(h^r + h^{-1}\tau^2\big) + C	\nonumber	\\
	&\leq
		C+1,
\end{align}
with the assumption $\tau\leq Ch^{\frac12}$. 
We have proven the validity for $m=0, 1$, and will recover this induction assumption at $m=n+1$.  \vspace{0.2in}

{\bf (5) Estimate for $\e_u^{n+1}$:} We still estimate $\e_u^{n+1}$ first by adopting $\v_h=\e_u^{n+1}$ and $q_h=p_h^{n+1}$ in \eqref{error3}-\eqref{error4}, respectively, and then obtain
\begin{align}
		&\frac1{4\tau}\Big(\|\e_u^{n+1}\|_{L^2}^2 - \|\e_u^n\|_{L^2}^2 
			+ \|2\e_u^{n+1}-\e_u^n\|_{L^2}^2 - \|2\e_u^n-\e_u^{n-1}\|_{L^2}^2	
			+ \|\e_u^{n+1} - 2\e_u^n+\e_u^{n-1}\|_{L^2}^2\Big)					\nonumber	\\
		&\hspace{0.2in} 
		 	+ \eta\|\nabla\e_u^{n+1}\|_{L^2}^2 	
			=- \Big( b(\widetilde{\u}^{n+1}, \u^{n+1}, \e_u^{n+1}) - b(\widetilde{\u}_h^{n+1}, \u_h^{n+1}, \e_u^{n+1}) \Big) 			\nonumber	\\
		&\hspace{0.2in}
			+\Big(- (\widetilde{\rho}^{n+1} \nabla\phi^{n+1} - \widetilde{\rho}_h^{n+1} \nabla\phi_h^{n+1}, \e_u^{n+1})\Big)		
			+\mathfrak{T}_{u}^{n+1}(\e_u^{n+1})			\nonumber	\\
		&:=\sum_{i=1}^3I_{4i}, 	\label{I4}		
\end{align}

For $I_{41}$, it can be estimated by 
\begin{align}
	I_{41}=&-\frac12\bigg\{ \Big[\big(\widetilde{\u}^{n+1}\cdot\nabla\u^{n+1}, \e_u^{n+1}\big) 
				- \big(\widetilde{\u}_h^{n+1}\cdot\nabla\u_h^{n+1}, \e_u^{n+1}\big) \Big]					\nonumber	\\
			& 
				- \Big[ \big(\widetilde{\u}^{n+1}\cdot\e_u^{n+1}, \nabla\u^{n+1}\big) 
				- \big(\widetilde{\u}_h^{n+1}\cdot\e_u^{n+1}\big), \nabla\u_h^{n+1}\Big]	\bigg\}		\nonumber	\\
		=& -\frac12
			\bigg\{ \Big[\big((\widetilde{\u}^{n+1}-\Q_h\widetilde{\u}^{n+1})\cdot\nabla\u^{n+1}, \e_u^{n+1}\big)
				+ \big(\widetilde{\e}_u^{n+1}\cdot\nabla\u^{n+1}, \e_u^{n+1}\big)						\nonumber	\\
		&
				+ \big(\widetilde{\u}_h^{n+1}\cdot\nabla(\u^{n+1}-\Q_h\u^{n+1}), \e_u^{n+1}\big)
				+ \big(\widetilde{\u}_h^{n+1}\cdot\nabla\e_u^{n+1}, \e_u^{n+1}\big)		\Big]		\nonumber	\\
		&
			-\Big[\big((\widetilde{\u}^{n+1}-\Q_h\widetilde{\u}^{n+1})\cdot \nabla\e_u^{n+1}, \u^{n+1}\big)
			+ \big(\widetilde{\e}_u^{n+1}\cdot\nabla\e_u^{n+1}, \u^{n+1}\big)						\nonumber	\\
		&
			+ \big(\widetilde{\u}_h^{n+1}\cdot\nabla \e_u^{n+1}, \u^{n+1}-\Q_h\u^{n+1}\big)
			+ \big(\widetilde{\u}_h^{n+1}\cdot\nabla\e_u^{n+1}, \e_u^{n+1}\big)		\Big]	\bigg\}		\nonumber	\\
		:=&
			-\frac12\bigg\{\sum_{i=1}^{4}I_{41,i} - \sum_{i=5}^{8}I_{41,i}\bigg\}.	\label{I41-ori}
\end{align}
We estimate these terms one by one as 
\begin{align*}
	I_{41,1}\leq& \|\widetilde{\u}^{n+1} - \Q_h\widetilde{\u}^{n+1}\|_{L^2}\|\nabla\u^{n+1}\|_{L^3}\|\e_u^{n+1}\|_{L^6}
				\leq Ch^{2r+2} + \frac{\eta}{8}\|\nabla\e_u^{n+1}\|_{L^2}^2,	\\
	I_{41,2}\leq& \|\widetilde{\e}_u^{n+1}\|_{L^2}\|\nabla\u^{n+1}\|_{L^3}\|\e_u^{n+1}\|_{L^6}
				\leq C\|\widetilde{\e}_u^{n+1}\|_{L^2}^2 + \frac{\eta}{8}\|\nabla\e_u^{n+1}\|_{L^2}^2,	\\
	I_{41,3}=& - \big(\nabla\cdot\widetilde{\u}_h^{n+1}, (\u^{n+1}-\Q_h\u^{n+1})\cdot\e_u^{n+1}\big)
						- \big(\widetilde{\u}_h^{n+1}\cdot\nabla\e_u^{n+1}, \u^{n+1}-\Q_h\u^{n+1}\big) 	\\
				\leq&
					\|\nabla\cdot\widetilde{\u}_h^{n+1}\|_{L^2}\|\u^{n+1}-\Q_h\u^{n+1}\|_{L^3}\|\e_u^{n+1}\|_{L^6}
					+ \|\widetilde{\u}_h^{n+1}\|_{L^6}\|\nabla\e_u^{n+1}\|_{L^2}\|\u^{n+1}-\Q_h\u^{n+1}\|_{L^3}		\\
				\leq&
					Ch^{2r+2} +\frac{\eta}{8}\|\nabla\e_u^{n+1}\|_{L^2}^2,	\\
	I_{41,4} \, + \, &I_{41,8}=0,			\\
	I_{41, 5}\leq& \|\widetilde{\u}^{n+1}-\Q_h\widetilde{\u}^{n+1}\|_{L^2}\|\nabla\e_u^{n+1}\|_{L^2}\|\u^{n+1}\|_{L^\infty}
					\leq Ch^{2r+2} + \frac{\eta}{8}\|\nabla\e_u^{n+1}\|_{L^2}^2,	\\
	I_{41,6}\leq& \|\widetilde{\e}_u^{n+1}\|_{L^2}\|\nabla\e_u^{n+1}\|_{L^2}\|\u^{n+1}\|_{L^\infty}
					\leq C\|\widetilde{\e}_u^{n+1}\|_{L^2}^2 + \frac{\eta}{8}\|\nabla\e_u^{n+1}\|_{L^2}^2,		\\
	I_{41,7}\leq& \|\widetilde{\u}_h^{n+1}\|_{L^6}\|\nabla \e_u^{n+1}\|_{L^2}\|\u^{n+1}-\Q_h\u^{n+1}\|_{L^3}	
					\leq Ch^{2r+2} +  \frac{\eta}{8}\|\nabla\e_u^{n+1}\|_{L^2}^2, 
\end{align*}
with which we can rewrite \eqref{I41-ori} as
\begin{align}\label{I41}
	I_{41} \leq C\big(h^{2r+2} + \|\widetilde{\e}_u^{n+1}\|_{L^2}^2\big) + \frac{3\eta}{8}\|\nabla\e_u^{n+1}\|_{L^2}^2.
\end{align}

For term $I_{42}$, it can be calculated as 
\begin{align}
	I_{42}=& 
				- \Big[\big((\widetilde{\rho}^{n+1}-R_h\widetilde{\rho}^{n+1})\nabla\phi^{n+1}, \e_u^{n+1}\big)
					+ \big(\widetilde{e}_\rho^{n+1}\nabla\phi^{n+1}, \e_u^{n+1}\big)		\nonumber	\\
				& 
				+ \big(\widetilde{\rho}_h^{n+1}\nabla(\phi^{n+1}-R_h\phi^{n+1}), \e_u^{n+1}\big)
					+ \big(\widetilde{\rho}_h^{n+1}\nabla e_\phi^{n+1}, \e_u^{n+1}\big)	\Big]		\nonumber	\\
			\leq& 
				\|\widetilde{\rho}^{n+1}-R_h\widetilde{\rho}^{n+1}\|_{L^2}\|\nabla\phi^{n+1}\|_{L^\infty}\|\e_u^{n+1}\|_{L^2}
					+ \|\widetilde{e}_\rho^{n+1}\|_{L^2}\|\nabla\phi^{n+1}\|_{L^\infty}\|\e_u^{n+1}\|_{L^2}		\nonumber	\\
				& 
				+ C\big(h^{r+1} + \|\widetilde{e}_\rho^{n+1}\|_{L^2}\big)\|\nabla\e_u^{n+1}\|_{L^2}
					+ C\big(h^{1-\frac d6}\|\nabla\e_u^{n+1}\|_{L^2} + \|\e_u^{n+1}\|_{L^2}\big)\|\nabla e_\phi^{n+1}\|_{L^2}	\nonumber	\\
			\leq&
				C\big(h^{2r+2} + \|\widetilde{e}_\rho^{n+1}\|_{L^2}^2 + \|\e_u^{n+1}\|_{L^2}^2\big)
				 	+ \frac{\eta}{32}\|\nabla\e_u^{n+1}\|_{L^2}^2 + Ch^{2-\frac d3}\|\nabla\e_u^{n+1}\|_{L^2}^2
					+ \|\nabla e_\phi^{n+1}\|_{L^2}^2,				\nonumber	\\ 
			\leq 
				&C\big(h^{2r+2} + \|\widetilde{e}_\rho^{n+1}\|_{L^2}^2 + \|\e_u^{n+1}\|_{L^2}^2\big)
				+  \frac{\eta}{16}\|\nabla\e_u^{n+1}\|_{L^2}^2 + \|\nabla e_\phi^{n+1}\|_{L^2}^2,	\label{I42}
\end{align}
with some small $h$ satisfying $Ch^{2-\frac d3}\leq \frac{\eta}{32}$. Here, for $r\geq 2$, the third term of $I_{42}$ can be controlled by
\begin{align*}
		&\big(-\widetilde{\rho}_h^{n+1}\nabla(\phi^{n+1}-R_h\phi^{n+1}), \e_u^{n+1}\big)		\\
	&=
		 \big(\nabla\widetilde{\rho}_h^{n+1}(\phi^{n+1}-R_h\phi^{n+1}), \e_u^{n+1}\big)
		+  \big(\widetilde{\rho}_h^{n+1}(\phi^{n+1}-R_h\phi^{n+1}), \nabla\cdot\e_u^{n+1}\big)	\\
	&=
		\Big[ \big(-\nabla\widetilde{e}_\rho^{n+1}(\phi^{n+1}-R_h\phi^{n+1}), \e_u^{n+1}\big)
		+ \big(\nabla R_h\widetilde{\rho}^{n+1}(\phi^{n+1}-R_h\phi^{n+1}), \e_u^{n+1}\big) \Big]	\\
		& \ \ ~
		+ \Big[ \big(-\widetilde{e}_\rho^{n+1}(\phi^{n+1}-R_h\phi^{n+1}), \nabla\cdot\e_u^{n+1}\big)
		+  \big((R_h\widetilde{\rho}^{n+1}-\widetilde{\rho}^{n+1})(\phi^{n+1}-R_h\phi^{n+1}), \nabla\cdot\e_u^{n+1}\big)	\\
		& \ \ ~
		+  \big(\widetilde{\rho}^{n+1}(\phi^{n+1}-R_h\phi^{n+1}), \nabla\cdot\e_u^{n+1}\big) \Big]		\\
	&\leq
		\|\nabla\widetilde{e}_\rho^{n+1}\|_{L^2}\|\phi^{n+1}-R_h\phi^{n+1}\|_{L^3}\|\e_u^{n+1}\|_{L^6}
		+ \|\nabla R_h\widetilde{\rho}^{n+1}\|_{L^4}\|\phi^{n+1}-R_h\phi^{n+1}\|_{L^2}\|\e_u^{n+1}\|_{L^4}		\\
		& \ \ ~
		+ \|\widetilde{e}_\rho^{n+1}\|_{L^4}\|\phi^{n+1}-R_h\phi^{n+1}\|_{L^4}\|\nabla\cdot\e_u^{n+1}\|_{L^2}
		+ \|R_h\widetilde{\rho}^{n+1}-\widetilde{\rho}^{n+1}\|_{L^4}\|\phi^{n+1}-R_h\phi^{n+1}\|_{L^4}\|\nabla\cdot\e_u^{n+1}\|_{L^2} \\
		& \ \ ~
		+ \|\widetilde{\rho}^{n+1}\|_{L^\infty}\|\phi^{n+1}-R_h\phi^{n+1}\|_{L^2}\|\nabla\cdot\e_u^{n+1}\|_{L^2}	\\
	&\leq
		Ch^{-1}\|\widetilde{e}_\rho^{n+1}\|_{L^2}~h^r\|\nabla\e_u^{n+1}\|_{L^2}
		+ Ch^{r+1}\|\nabla\e_u^{n+1}\|_{L^2}
		+ Ch^{-\frac{d}{4}}\|\widetilde{e}_\rho^{n+1}\|_{L^2}~h^r\|\nabla\e_u^{n+1}\|_{L^2}		\\
		& \ \ ~
		+ Ch^{2r}\|\nabla\e_u^{n+1}\|_{L^2} + Ch^{r+1}\|\nabla\e_u^{n+1}\|_{L^2}	\\
	&\leq
		C\big(h^{r+1} + \|\widetilde{e}_\rho^{n+1}\|_{L^2}\big)\|\nabla\e_u^{n+1}\|_{L^2},
\end{align*}
and the fourth term of $I_{42}$ is estimated as 
\begin{align*}
	&\big(-\widetilde{\rho}_h^{n+1}\nabla e_\phi^{n+1}, \e_u^{n+1}\big)	\\
	&=
		\big(\widetilde{e}_\rho^{n+1}\nabla e_\phi^{n+1}, \e_u^{n+1}\big) 
		+ \big((\widetilde{\rho}^{n+1}-R_h\widetilde{\rho}^{n+1})\nabla e_\phi^{n+1}, \e_u^{n+1}\big)
		- \big(\widetilde{\rho}^{n+1}\nabla e_\phi^{n+1}, \e_u^{n+1}\big)	\\
	&\leq
		\|\widetilde{e}_\rho^{n+1}\|_{L^3}\|\nabla e_\phi^{n+1}\|_{L^2}\|\e_u^{n+1}\|_{L^6}
		+ \|\widetilde{\rho}^{n+1}-R_h\widetilde{\rho}^{n+1}\|_{L^6}\|\nabla e_\phi^{n+1}\|_{L^2}\|\e_u^{n+1}\|_{L^3}	\\
		& \ \ ~
		+ \|\widetilde{\rho}^{n+1}\|_{L^\infty}\|\nabla e_\phi^{n+1}\|_{L^2}\|\e_u^{n+1}\|_{L^2}		\\
	&\leq
		Ch^{-\frac d6}\|\widetilde{e}_\rho^{n+1}\|_{L^2}\|\nabla e_\phi^{n+1}\|_{L^2}\|\nabla \e_u^{n+1}\|_{L^2}
		+ Ch^r\|\nabla e_\phi^{n+1}\|_{L^2}\,h^{-\frac d6}\|\e_u^{n+1}\|_{L^2}		\\
		& \ \ ~
		+ C\|\nabla e_\phi^{n+1}\|_{L^2}\|\e_u^{n+1}\|_{L^2}			\\
	&\leq
		C\big(h^{1-\frac d6}\|\nabla\e_u^{n+1}\|_{L^2} + \|\e_u^{n+1}\|_{L^2}\big)\|\nabla e_\phi^{n+1}\|_{L^2},
\end{align*}
due to $\tau\leq Ch^{\frac12}$ and then $h^{-\frac d6}\|\widetilde{e}_\rho^{n+1}\|_{L^2}\leq h^{-\frac d6}(h^{r+1}+\tau^2) \leq h^{1-\frac d6}$ via \eqref{induction-ass}.

For the truncation term $I_{43}$, we have 
\begin{align}
	I_{43}
		=& \big(D_\tau(\Q_h\u^{n+1} - \u^{n+1}), \e_u^{n+1}\big) 
			+ \big(D_\tau\u^{n+1}-\partial_t\u^{n+1}, \e_u^{n+1}\big)	\nonumber \\
		&
			+\big(b(\widetilde{\u}_h^{n+1}, \u^{n+1}, \e_u^{n+1}) - b(\u^{n+1}, \u^{n+1}, \e_u^{n+1})\big)
			+ \big((\widetilde{\rho}^{n+1}-\rho^{n+1})\nabla\phi^{n+1}, \e_u^{n+1}\big)	\nonumber	\\
		\leq&
			Ch^{r+1}\|\e_u^{n+1}\|_{L^2} + C\tau^2\|\e_u^{n+1}\|_{L^2} + C\|\e_u^{n+1}\|_{L^2}\|\nabla\e_u^{n+1}\|_{L^2}	\nonumber	\\
		\leq& 
			C\big(h^{2r+2} + \tau^4+ \|\e_u^{n+1}\|_{L^2}^2\big) + \frac{\eta}{16}\|\nabla\e_u^{n+1}\|_{L^2},	\label{I43}
\end{align}
where the analysis is similar to \eqref{I13}.

Combining \eqref{I41}-\eqref{I43}, we rewrite \eqref{I4} as  
\begin{align}
	&\frac1{4\tau}\Big(\|\e_u^{n+1}\|_{L^2}^2 - \|\e_u^n\|_{L^2}^2 
	+ \|2\e_u^{n+1}-\e_u^n\|_{L^2}^2 - \|2\e_u^n-\e_u^{n-1}\|_{L^2}^2	
	\Big)				
	+ \frac\eta2\|\nabla\e_u^{n+1}\|_{L^2}^2 								\nonumber	\\
	&\leq C\big(h^{2r+2} + \tau^4 + \|\widetilde{\e}_u^{n+1}\|_{L^2}^2 + \|\widetilde{e}_\rho^{n+1}\|_{L^2}^2 
		+ \|\e_u^{n+1}\|_{L^2}^2\big) + \|\nabla e_\phi^{n+1}\|_{L^2}^2.
	\label{I4-final}		
\end{align}
\vspace{0.0in}

{\bf (6) Estimate for $e_\rho^{n+1}$:}  By letting $\chi_h=e_\rho^{n+1}$ in \eqref{error2}, we arrive at 
\begin{align}
		&\frac1{4\tau}\Big(\|e_\rho^{n+1}\|_{L^2}^2 - \|e_\rho^n\|_{L^2}^2 
			+ \|2e_\rho^{n+1}-e_\rho^n\|_{L^2}^2 - \|2e_\rho^n-e_\rho^{n-1}\|_{L^2}^2	
			+ \|e_\rho^{n+1} - 2e_\rho^n+e_\rho^{n-1}\|_{L^2}^2\Big)		\nonumber	\\
		&	\hspace{0.2in}
			+D\|\nabla e_{\rho}^{n+1}\|_{L^2}^2 + \frac{\sigma}{\epsilon}\|e_\rho^{n+1}\|_{L^2}^2	\nonumber	\\
		&=
			\big(\widetilde{\rho}^{n+1}\u^{n+1} - \widetilde{\rho}_h^{n+1}\u_h^{n+1}, \nabla e_\rho^{n+1}\big)
			+ \frac{\sigma}{\epsilon}\big(R_h\rho^{n+1}-\rho^{n+1}, e_\rho^{n+1}\big) 
			+ \mathfrak{T}_\rho^{n+1}(e_\rho^{n+1})	\nonumber	\\
		&:=\sum_{i=1}^3I_{5i}.	\label{I5}
\end{align}

For $I_{51}$, we obtain
\begin{align}
	I_{51}=&
		\big((\widetilde{\rho}^{n+1}-R_h\widetilde{\rho}^{n+1})\u^{n+1}, \nabla e_\rho^{n+1}\big)
			+ \big(\widetilde{e}_\rho^{n+1}\u^{n+1}, \nabla e_\rho^{n+1}\big)		\nonumber	\\
		&
		+ \big(\widetilde{\rho}_h^{n+1}(\u^{n+1} - \Q_h\u^{n+1}), \nabla e_\rho^{n+1}\big)
			+ \big(\widetilde{\rho}_h^{n+1}\e_u^{n+1}, \nabla e_\rho^{n+1}\big)		\nonumber	\\
	\leq&
		\|\widetilde{\rho}^{n+1}-R_h\widetilde{\rho}^{n+1}\|_{L^2}\|\u^{n+1}\|_{L^\infty}\|\nabla e_\rho^{n+1}\|_{L^2}
			+ \|\widetilde{e}_\rho^{n+1}\|_{L^2}\|\u^{n+1}\|_{L^\infty}\|\nabla e_\rho^{n+1}\|_{L^2}		\nonumber	\\
		&+ \|\widetilde{\rho}_h^{n+1}\|_{L^6}\|\u^{n+1} - \Q_h\u^{n+1}\|_{L^3}\|\nabla e_\rho^{n+1}\|_{L^2}
			+C(h^{1-\frac d6}\|\nabla\e_u^{n+1}\|_{L^2} + \|\e_u^{n+1}\|_{L^2}) \|\nabla e_\rho^{n+1}\|_{L^2}	\nonumber	\\
	\leq&
		C\big(h^{2r+2} + \|\widetilde{e}_\rho^{n+1}\|_{L^2}^2 + \|\e_u^{n+1}\|_{L^2}^2 + h^{2-\frac d3}\|\nabla\e_u^{n+1}\|_{L^2}^2\big)
			+ \frac D4\|\nabla e_\rho^{n+1}\|_{L^2}^2,		\label{I51}
\end{align}
where the last term of $I_{51}$ is estimated as
\begin{align*}
	&\big(\widetilde{\rho}_h^{n+1}\e_u^{n+1}, \nabla e_\rho^{n+1}\big)			\\
	&=\big(-\widetilde{e}_\rho^{n+1}\e_u^{n+1}, \nabla e_\rho^{n+1}\big)	 
		+ \big((R_h\widetilde{\rho}^{n+1} - \widetilde{\rho}^{n+1})\e_u^{n+1}, \nabla e_\rho^{n+1}\big)	
		+ \big( \widetilde{\rho}^{n+1}\e_u^{n+1}, \nabla e_\rho^{n+1}\big)			\\
	&\leq
		\|\widetilde{e}_\rho^{n+1}\|_{L^3}\|\e_u^{n+1}\|_{L^6}\|\nabla e_\rho^{n+1}\|_{L^2}
		+ \|R_h\widetilde{\rho}^{n+1} - \widetilde{\rho}^{n+1}\|_{L^4}\|\e_u^{n+1}\|_{L^4}\|\nabla e_\rho^{n+1}\|_{L^2}	\\
		& \hspace{0.2in}
		+ \|\widetilde{\rho}^{n+1}\|_{L^\infty}\|\e_u^{n+1}\|_{L^2}\|\nabla e_\rho^{n+1}\|_{L^2}		\\
	&\leq
		Ch^{-\frac d6}\|\widetilde{e}_\rho^{n+1}\|_{L^2}\|\nabla\e_u^{n+1}\|_{L^2}\|\nabla e_\rho^{n+1}\|_{L^2}
		+ Ch^r \, h^{-\frac d4} \|\e_u^{n+1}\|_{L^2}\|\nabla e_\rho^{n+1}\|_{L^2}	\\
	&\hspace{0.2in}	
		+ C \|\e_u^{n+1}\|_{L^2}\|\nabla e_\rho^{n+1}\|_{L^2}	\\
	&\leq
		C\big(h^{1-\frac d6}\|\nabla\e_u^{n+1}\|_{L^2} + \|\e_u^{n+1}\|_{L^2}\big) \|\nabla e_\rho^{n+1}\|_{L^2}.
\end{align*}

It is easy to calculate $I_{52}$ and  $I_{53}$ as
\begin{align}
	I_{52} \leq& 
		\frac{\sigma}{\epsilon}\|R_h\rho^{n+1}-\rho^{n+1}\|_{L^2}\|e_\rho^{n+1}\|_{L^2}
			\leq Ch^{2r+2} + \frac{\sigma}{4\epsilon}\|e_{\rho}^{n+1}\|_{L^2}^2,		\label{I52}	\\
	I_{53} =&
		\big(D_\tau(R_h\rho^{n+1}-\rho^{n+1}), e_\rho^{n+1}\big) + \big(D_\tau\rho^{n+1}-\partial_t\rho^{n+1}, e_\rho^{n+1}\big)
			+ \big((\widetilde{\rho}^{n+1}-\rho^{n+1})\u^{n+1}, \nabla e_\rho^{n+1}\big)	\nonumber 	\\
		\leq&
			Ch^{r+1}\|e_\rho^{n+1}\|_{L^2} + C\tau^2\|e_\rho^{n+1}\|_{L^2} + Ch^{r+1}\|\nabla e_\rho^{n+1}\|_{L^2}	\nonumber	\\
		\leq&
			C\big(h^{2r+2} + \tau^4\big) + \frac{D}{4}\|\nabla e_\rho^{n+1}\|_{L^2}^2 
				+ \frac{\sigma}{4\epsilon}\|e_\rho^{n+1}\|_{L^2}^2 \label{I53}
\end{align}

Thus, through \eqref{I51}-\eqref{I53}, equation \eqref{I5} is equal to
\begin{align}\label{I5-final}
	&\frac1{4\tau}\Big(\|e_\rho^{n+1}\|_{L^2}^2 - \|e_\rho^n\|_{L^2}^2 
	+ \|2e_\rho^{n+1}-e_\rho^n\|_{L^2}^2 - \|2e_\rho^n-e_\rho^{n-1}\|_{L^2}^2	\Big)			\nonumber		\\
	&\hspace{0.2in}
		+ \frac D2\|\nabla e_{\rho}^{n+1}\|_{L^2}^2 + \frac{\sigma}{2\epsilon}\|e_\rho^{n+1}\|_{L^2}^2	\nonumber	\\
	&\leq
		C\big(h^{2r+2} + \tau^4 + \|\widetilde{e}_{\rho}^{n+1}\|^2 + \|\e_u^{n+1}\|_{L^2} 
			+ h^{2-\frac d3}\|\nabla\e_u^{n+1}\|_{L^2}^2\big).
\end{align}
\vspace{0.0in}

{\bf (7) Estimate for $e_\phi^{n+1}$:} In order to apply the discrete Grownwall inequality, it remains to handle the estimate of $\|\nabla e_\phi^{n+1}\|_{L^2}$ in \eqref{I4-final}. We adopt $\varphi_h = e_\phi^{n+1}$ in \eqref{error1} and then have
\begin{align*}
	\epsilon\|\nabla e_\phi^{n+1}\|_{L^2}^2 
	=& 
		\big(e_\rho^{n+1}, e_\phi^{n+1}\big) + \big(\rho^{n+1}-R_h\rho^{n+1}, e_\phi^{n+1}\big)	\\
	\leq&
		\|e_\rho^{n+1}\|_{L^2}\|e_\phi^{n+1}\|_{L^2} + \|\rho^{n+1}-R_h\rho^{n+1}\|_{L^2}\|e_\phi^{n+1}\|_{L^2}	\\
	\leq&
		C\big(\|e_\rho^{n+1}\|_{L^2} + h^{r+1}\big)\|\nabla e_\phi^{n+1}\|_{L^2},
\end{align*}
which implies 
\begin{align}\label{I6}
	\|\nabla e_\phi^{n+1}\|_{L^2}  \leq C\big(\|e_\rho^{n+1}\|_{L^2} + h^{r+1}\big).
\end{align}
\vspace{0.0in}

{\bf (8) Final estimate at $t=t^n (n\geq 2)$:} We combine \eqref{I4-final}, \eqref{I5-final} and \eqref{I6} to obtain
\begin{align*}
	&\frac1{4\tau}\Big(\|\e_u^{n+1}\|_{L^2}^2 - \|\e_u^n\|_{L^2}^2 	
		+ \|2\e_u^{n+1}-\e_u^n\|_{L^2}^2 - \|2\e_u^n-\e_u^{n-1}\|_{L^2}^2		\\
	&\hspace{0.2in}			
		+ \|e_\rho^{n+1}\|_{L^2}^2 - \|e_\rho^n\|_{L^2}^2 			
		+ \|2e_\rho^{n+1}-e_\rho^n\|_{L^2}^2 - \|2e_\rho^n-e_\rho^{n-1}\|_{L^2}^2	\Big)					\\
	&\hspace{0.2in}
		+ \frac\eta2\|\nabla\e_u^{n+1}\|_{L^2}^2 
		+ \frac D2\|\nabla e_{\rho}^{n+1}\|_{L^2}^2 + \frac{\sigma}{2\epsilon}\|e_\rho^{n+1}\|_{L^2}^2									\\
	\leq& 
		C\big(h^{2r+2} + \tau^4 + \|\widetilde{\e}_u^{n+1}\|_{L^2}^2 + \|\widetilde{e}_\rho^{n+1}\|_{L^2}^2 
		+ \|\e_u^{n+1}\|_{L^2}^2\big) + Ch^{2-\frac d3}\|\nabla\e_u^{n+1}\|_{L^2}^2 	\\
	&\hspace{0.2in}
		+ C\big(\|e_\rho^{n+1}\|_{L^2}^2 + h^{2r+2}\big).
\end{align*} 
By defining 
\begin{align*}
	{\theta}^{n+1}&= \|\e_u^{n+1}\|_{L^2}^2 +  \|2\e_u^{n+1}-\e_u^n\|_{L^2}^2 
		+ \|e_\rho^{n+1}\|_{L^2}^2 + \|2e_\rho^{n+1}-e_\rho^n\|_{L^2}^2,	\\
	{\xi}^{n+1}& = 	\eta\|\nabla\e_u^{n+1}\|_{L^2}^2 
	+ D\|\nabla e_{\rho}^{n+1}\|_{L^2}^2 + \frac{\sigma}{\epsilon}\|e_\rho^{n+1}\|_{L^2}^2,
\end{align*}
and for some small $h$ satisfying $Ch^{2-\frac d3}\leq \frac \eta4$, we then have
\begin{align*}
	\frac{1}{4\tau}\big(\theta^{n+1} - \theta^n\big) + \frac14\xi^{n+1}	
	\leq& 
		C\big(h^{2r+2} + \tau^4 + \|\widetilde{\e}_u^{n+1}\|_{L^2}^2 + \|\widetilde{e}_\rho^{n+1}\|_{L^2}^2 
		+ \|\e_u^{n+1}\|_{L^2}^2 + \|e_\rho^{n+1}\|_{L^2}^2\big)	\\
	\leq&
		C\big(h^{2r+2} + \tau^4 +  \theta^n +  \theta^{n+1}\big). 
\end{align*}
Hence, an application of the discrete Grownwall inequality leads to the desired result
\begin{align*}
	\theta^{n+1} + \tau\sum_{i=1}^{n+1}\xi^{i} \leq C\big(h^{2r+2} + \tau^4 \big),
\end{align*}
i.e., 
\begin{align}
	\|\e_u^{n+1}\|_{L^2}^2  + \|e_\rho^{n+1}\|_{L^2}^2 \leq C(h^{2r+2} + \tau^4 ),
\end{align}
which recovers the induction assumption \eqref{induction-ass}, for $n=1,2,..., N_T-1$.

Next, we investigate the $L^2$ error estimate of $e_\phi^{n+1}$ by using the duality technique. 
From \eqref{I6} we have
\begin{align}\label{error-gradphi}
	\|\nabla e_\phi^{n+1}\|_{L^2} \leq C\big(h^{r+1} + \tau^2\big).
\end{align}
Let $w\in H^2(\Omega)$  be the unique solution of the equation 
\begin{equation}\label{duality-pde}
		-\Delta w = e_\phi^{n+1}	\qquad \mbox{in}	~ \Omega,	
\end{equation} 
with $ \nabla w \cdot\n= 0$ on boundary $\partial\Omega$ and $(w, 1) = 0$.
From the classic elliptic theory \cite{Brenner2008}, if $e_\phi^{n+1}\in L^2(\Omega)$, then $w$ satisfies 
\begin{align}\label{est_w_H2}
	|w|_{H^2}\leq C\|e_\phi^{n+1}\|_{L^2}.
\end{align} 
Meanwhile, the solution of \eqref{duality-pde} also solves the following variational formulation: Find $w\in H^1(\Omega)$ such that it holds 
\begin{align}\label{duality-var}
	\big(\nabla w, \nabla v\big) = \big(e_\phi^{n+1}, v\big),		\qquad \forall ~ v\in H^1(\Omega),
\end{align}
for which the unique solvability can be guaranteed with the help of the Lax--Miligram theorem.
We take $v=w$ in \eqref{duality-var} to obtain 
\begin{align*}
	\|\nabla w\|_{L^2}^2 = \big(e_\phi^{n+1}, w\big) \leq \|e_\phi^{n+1}\|_{L^2}\|w\|_{L^2}
		 \leq C\|e_\phi^{n+1}\|_{L^2}\|\nabla  w\|_{L^2},
\end{align*}
which together with $\|w\|_{L^2} \leq C\|\nabla w\|_{L^2} \leq C\|e_\phi^{n+1}\|_{L^2}$ yields $\|w\|_{H^2} \leq C\|e_\phi^{n+1}\|_{L^2}$.
Meanwhile, taking $v=e_\phi^{n+1}$ in \eqref{duality-var} and $\varphi_h=R_hw$ in \eqref{error1} yield
\begin{align*}
	\|e_\phi^{n+1}\|_{L^2}^2 
		=& \big(\nabla w, \nabla e_\phi^{n+1}\big)	\\
		=& \big(\nabla w - \nabla R_hw, \nabla e_\phi^{n+1}\big) + {\epsilon}^{-1}\big(e_\rho^{n+1}, R_hw\big)
			+ {\epsilon}^{-1}\big(\rho^{n+1}-R_h\rho^{n+1}, R_hw\big) 	\\
		\leq&
			\|\nabla w - \nabla R_hw\|_{L^2}\|\nabla e_\phi^{n+1}\|_{L^2}
			+  C\|e_\rho^{n+1}\|_{L^2}\|R_hw\|_{L^2} + C\|\rho^{n+1}-R_h\rho^{n+1}\|_{L^2}\|R_hw\|_{L^2}	\\
		\leq&
			Ch\|w\|_{H^2}\big(h^{r+1} + \tau^2\big) + C\big(h^{r+1} + \tau^2\big)\|w\|_{L^2} + Ch^{r+1}\|w\|_{L^2}		\qquad\mbox{(by using \eqref{error-gradphi})}\\
		\leq&
			C\big(1+h\big)\big(h^{r+1}+\tau^2\big) \|w\|_{H^2}	\\
		\leq& 
			C\big(h^{r+1} + \tau^2\big) \|e_\phi^{n+1}\|_{L^2},
\end{align*}
which in turn leads to
\begin{align*}
	\|e_\phi^{n+1}\|_{L^2} \leq C\big(h^{r+1} + \tau^2\big).
\end{align*}

Finally, we obtain the error estimate results
\begin{align*}
	\|\e_u^{n+1}\|_{L^2} + \|e_\rho^{n+1}\|_{L^2} + \|e_\phi^{n+1}\|_{L^2} \leq C(h^{r+1} + \tau^2)
\end{align*}
which together with the triangle inequality can complete the proof of Theorem \ref{thm-error}.

\section{Numerical Examples}\label{sec-numerical}

In this section, we will present some numerical examples to illustrate the theoretical results in Theorem \ref{thm-error} and Theorem \ref{thm-energy}. 

\subsection{Convergence test}

We first consider that, in a unit domain $\Omega=(0, 2\pi)\times(0, 2\pi)$ and the terminal time $T=1$, system \eqref{PDE1}-\eqref{PDE4} admits the unique solution
\begin{equation*}
	\begin{aligned}
		&\phi = t^4\cos(x)\cos(y),	\\
		&\rho = t^4\cos(x)\cos(y),	\\
		&\u = t^4
		\left(
		\begin{aligned}
			\sin^2(x)\sin(2y)	\\
			-\sin(2x)\sin^2(y)	
		\end{aligned}
		\right),		\\
		&p = t^4\cos(x)\cos(y),
	\end{aligned}
\end{equation*}
and the parameters are set as $\varepsilon=D=\sigma=\eta=1$.
Then, to investigate the temporal convergence orders, we first choose $\tau= \frac{1}{10}, \frac{1}{20}, \frac{1}{40}, \frac{1}{80}$ with a sufficiently small $h=\frac{2\pi}{160}$, and obtain the numerical results in Table \ref{tab_error_time}, which are of second-order accuracy as shown in Theorem \ref{thm-error}.  

\begin{table}[h]
	\centering
	\caption{Temporal convergence rates of scheme \eqref{scheme1}-\eqref{scheme4} with $h=2\pi/160$} 
	\begin{tabular}{ccccccccc}
		\toprule
		$\tau$		&$e_\phi$			& rate  & $e_\rho$ 			&rate	&$\e_u$		&rate	\\
		\midrule
		1/10	&2.596e-02   &-		&5.274e-02   &-		&4.404e-02      & -		   		\\
		1/20   &6.879e-03   &1.92  &1.399e-02   &1.91  &1.157e-02      &1.93		\\
		1/40   &1.768e-03   &1.96  &3.600e-03   &1.96  &2.963e-03    &1.97		\\
		1/80   &4.482e-04   &1.98  &9.130e-04	&1.98  &7.496e-04    &1.98 		\\
		\bottomrule
	\end{tabular}
	\label{tab_error_time}
\end{table}

Next, we choose $h=\frac{2\pi}{10}, \frac{2\pi}{20}, \frac{2\pi}{40}, \frac{2\pi}{80}$ by fixing $\tau=\frac{1}{1000}$ to be negligible, and then get the numerical results as shown in Table \ref{tab_error_space}, which are also consistent with the theoretical analysis in Theorem \ref{thm-error}.  

\begin{table}[h]
	\centering
	\caption{Spatial convergence rates of  scheme \eqref{scheme1}-\eqref{scheme4} with $\tau=1/1000$} 
	\begin{tabular}{ccccccccc}
	\toprule
	$h$		&$e_\phi$			& rate  & $e_\rho$ 			&rate	&$\e_u$		&rate	\\
	\midrule
	2$\pi$/10	&1.404e-02   &-		&1.275e-02   &-		&7.523e-02      & -			\\
	2$\pi$/20   &1.760e-03   &3.00  &1.717e-03   &2.89  &1.041e-02      &2.85		\\
	2$\pi$/40   &2.204e-04  &3.00  &2.192e-04  &2.97  &1.352e-03      &2.94		\\
	2$\pi$/80   &2.764e-05  &3.00  &2.816e-05	&2.96  &1.708e-04      &2.98	\\
	\bottomrule
\end{tabular}
	\label{tab_error_space}
\end{table}

\subsection{Stability and mass conservation}

In this subsection, we still consider a unit domain $\Omega=(0, 2\pi)\times(0, 2\pi)$ and terminal time $T=1$ with the parameters $\varepsilon=D=\sigma=\eta=1$.
We compute the following discrete energy function
\begin{align*}
	\mathcal{E}_h^{n} := \epsilon\|\nabla\phi_h^{n}\|_{L^2}^2 + \epsilon\|2\nabla\phi_h^{n}-\nabla\phi_h^{n-1}\|_{L^2}^2 
		+ \|\u_h^{n}\|_{L^2}^2 + \|2\u_h^{n}-\u_h^{n-1}\|_{L^2}^2,
\end{align*}
as defined in Theorem \ref{thm-energy}, and the charge mass $(\rho_h^n, 1)$ for $n=1,2,...,N_T$.
By defining the initial data as
\begin{equation*}
	\begin{aligned}
		&\rho = t^4\cos(x)\cos(y),	\\
		&\u = t^4
		\left(
		\begin{aligned}
			\sin^2(x)\sin(2y)	\\
			-\sin(2x)\sin^2(y)	
		\end{aligned}
		\right),
\end{aligned}
\end{equation*}
mesh size $h=\frac{2\pi}{50}$, and time step size $\tau=\frac{1}{500}$,
we can obtain the energy decaying and the charge conservation curves in Figure \ref{energy_curve} and  Figure \ref{charge_curve}, respectively, which are consistent with the theoretical analysis in Theorem \ref{thm-energy}.
\begin{figure}[h]
	\centering
	\subfigure[Discrete energy]{\includegraphics[width=0.45\textwidth]{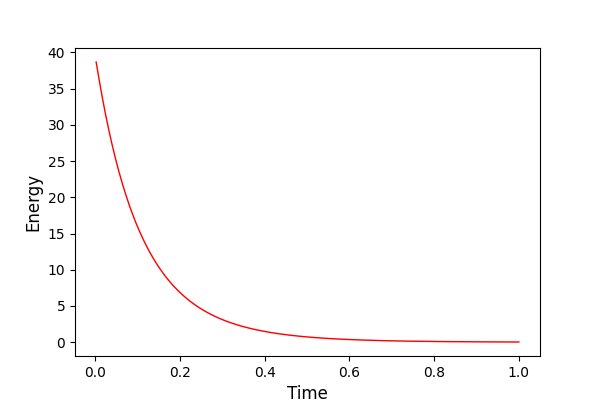}\label{energy_curve}}
	\subfigure[Discrete charge]{\includegraphics[width=0.45\textwidth]{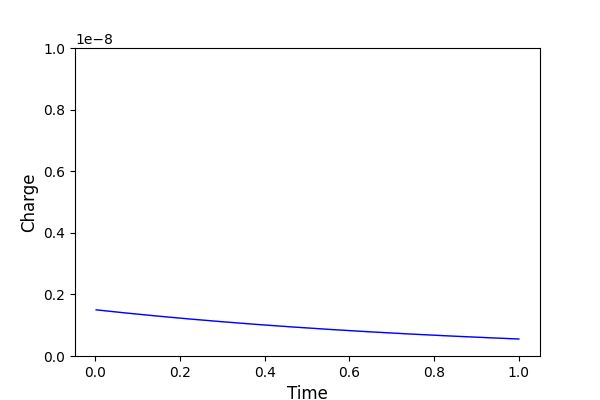}\label{charge_curve}}
	\caption{Discrete energy stability and charge mass conservation at each time level}
	\label{energy_and_charge}
\end{figure}

\section{Conclusion}

In this work, we prove the optimal error estimates of the high-order temporally scheme based on BDF2 and finite element methods for the EHD system.  It is the first work to achieve the optimal convergence rates both in time and space. This analytical frame can also be extended to other decoupled methods and the Cahn--Hilliard--EHD system, by combining the similar techniques developed in \cite{Ma2025, Wang2022} and \cite{Wang2024b}, respectively.

\subsection*{\bf Acknolwedgement}
The work of the authors was partially supported by the National Natural Science Foundation of China (Nos. 12271082, 62231016, 12471371).

\baselineskip 15pt
\renewcommand{\baselinestretch}{0.9}
\footnotesize
\bibliographystyle{plain}
\bibliography{EHDerror}

\end{document}